\newtheorem{theorem}{Theorem}
\newtheorem{lemma}{Lemma}[section]
\theoremstyle{definition}
\def\beq{ \begin{equation} }
\def\eeq{ \end{equation} }
\def\mn{\medskip\noindent}
\def\ep{\varepsilon}
\def\square{\vcenter{\vbox{\hrule height .4pt
  \hbox{\vrule width .4pt height 5pt \kern 5pt
        \vrule width .4pt} \hrule height .4pt}}}
\def\TT{{\cal T}}
\def\RR{\mathbb{R}}
\def\ZZ{\mathbb{Z}}
\def\var{\hbox{var}\,}
\def\sqz{\kern -0.2em}
\def\clearp{}
\begin{document}

\title{The Zealot Voter Model}
\author{Ran Huo and Rick Durrett \thanks{Both authors have been  partially supported by NSF grant DMS 1505215 from the probability program.}
\\
Dept. of Math, Duke U., \\ \small P.O. Box 90320, Durham, NC 27708-0320}
\date{\today}						

\maketitle

\begin{abstract}
Inspired by the spread of discontent as in the 2016 presidential election, we consider a voter model
in which 0's are ordinary voters and 1's are zealots. Thinking of a social network, but desiring the simplicity 
of an infinite object that can have  a nontrivial stationary distribution, 
space is represented by a tree. The dynamics are a variant of the biased voter: if $x$ has degree $d(x)$ then
at rate $d(x)p_k$ the individual at $x$ consults $k\ge 1$ neighbors.
If at least one neighbor is 1, they adopt state 1, otherwise they become 0. In addition at rate $p_0$
individuals with opinion 1 change to 0. As in the contact process on trees, we are interested in determining when the zealots
survive and when they will survive locally. 
\end{abstract}
 
\maketitle

\section{Introduction}

In the standard (linear) voter model, which was introduced by Holley and Liggett \cite{HL}, a site flips at a rate equal to the fraction of
neighbors that have the other opinion. Cox and Durrett \cite{CDNLV} began the study of voter models with non-linear flip rates. One of the
most successful ideas from that paper is the threshold-$\theta$ voter model in which sites flip at rate 1 if at least $\theta$ neighbors have
the opposite opinion. Liggett \cite{L94} obtained results for coexistence of opinions when $\theta=1$, while Chatterjee and Durrett \cite{thresh2} showed that the model with $\theta\ge2$ had a discontinuous phase transition on the random $r$-regular graph when $r \ge 3$. 
Lambiotte and Redner \cite{vacv} studied the ``vacillating voter model'' in which a voter looks at the opinions of two randomly chosen neighbors and flips if
at least one disagrees. At about the same time, Sturm and Swart considered ``rebellious voter models'' in one dimension. In the one-sided case $\xi_t(i)$ canges its opinion at rate $\alpha$ if $\xi_t(i+1)\neq \xi_t(i))$ and at an additional rate $1-\alpha$ if $\xi_t(i+1)\neq \xi_t(i+2))$. They also considered a spatially symmetric version. In all these variants of the voter model, the process is symmetric under interchange of 0's and 1's. Our zealot voter model does not have that symmetry.

In our process, space is represented by a tree $\mathcal{T}$ in which the degree of each vertex $x$ satisfies $3 \le d_{min} \le d(x) \le M$.
This guarantees that our trees are infinite.  Voters can be in state 0 (ordinary voter) or 1 (zealot). Given a probability distribution $p_k$ on $\{ 0, 1, 2, \ldots d_{min} \}$, if $k \ge 1$ then at rate $d(x)p_k$ the voter $x$ picks $k$ neighbors without replacement. As in the vacillating voter model the voter becomes 1 if at least one of the chosen neighbors is a 1, otherwise it becomes 0. In addition at rate $p_0$, voters change their opinion from 1 to 0. 

If $p_0=0$ then this model is a variant of the biased voter model. In that system, a 0 at $x$ changes to 1 at rate $\lambda$ times $n_1(x)$ the number of neighbors of $x$ that are in state $1$, and a 1 at $x$ changes to 0 at rate $n_0(x)$ the number of neighbors of $x$ that are in state $0$. If the degree is constant then the behavior of the process is easy to understand. If we start from finitely many 1's then the number of 1's at time $t$, $N^1_t$ decreases by 1 at a rate equal to $D_t$ the number of $(1,0)$ edges, and increases by 1 at rate $\lambda D_t$. Thus $N^1_t$ is a time change of  a simple random walk that increases by 1 with probability $\lambda/(\lambda+1)$ and decreases by 1 with probability $1/(\lambda+1)$. Using this observation it is easy to show that the critical value for the survival of 1's $\lambda_c=1$. In our setting sites do not have constant degree and we have a different type of bias. This makes things more complicated, and it is hard to get precise results on the location of phase transitions.

Our process is additive in the sense of Harris \cite{H76} and hence can be constructed on a graphical representation with independent 
Poisson processes $T^{x,i}_n$, $n \ge 1$, $0 \le i\le d_{min}$. 

\begin{itemize}
  \item 
The $T^{x,0}_n$ have rate $p_0$. At these times we write a $\delta$ at $x$ that will kill a 1 at the site.

\item
The $T^{x,i}_n$ have rate $d(x)p_i$. At time $T^{x,i}_N$ we write a $\delta$ at $x$ that will kill a 1 at the site.
In addition we draw oriented arrows to $x$ from $i$ neighbors $y_1, \ldots y_i$ chosen at random without replacement
from the set of neighbors. If any of the $y_i$ are in state 1, then $x$ will be in state 1. Otherwise it will be in state 0.
\end{itemize}

We will often use coordinate notation for the process, i.e., $\xi_t(x)$ gives the state of $x$ at time $t$. However it is also convenient to use the set-valued approach with $\xi_t^A$ giving the set of sites occupied by zealots at time $t$ when the initial set of zealots is $A$. 
Intuitively, the process $\xi^A_t$ can be defined by introducing fluid at the sites in $A$. The fluid flows up the graphical representation, being blocked by $\delta$'s, and flowing across edges in the direction of their orientations. The state at time $t$, $\xi^A_t$ is the set of points that can be reached by fluid at time $t$ starting from some site in $A$ at time $0$.

A nice feature of this construction is that it allows us to define a dual process in which fluid flows down the graphical representation, is blocked by $\delta$'s and flows across edges in a direction opposite their orientations. We let $\zeta^{B,t}_s$ be the points reachable at time $t-s$ starting from $B$ at time $t$. {
It is immediate from the construction that
\begin{equation}\label{dual1}
\{ \xi_t^A \cap B \neq \emptyset \} = \{A \cap \zeta^{B,t}_t  \neq \emptyset \}
\end{equation}
It should be clear from the construction that the distribution of  $\zeta^{B,t}_s$ for $0 \le s \le t$ does not depend on $t$,
so we drop the $t$ and write the duality as
\begin{equation}\label{dual1}
P( \xi_t^A \cap B \neq \emptyset ) = P( A \cap \zeta^{B}_t  \neq \emptyset )
\end{equation}
The dual $\zeta^B_t$ is a coalescing branching random walk (COBRA) with the following rules. A particle at $x$ dies at rate $p_0 $ and 
at rate $d(x)p_k$ it dies after giving birth to offspring that occupy $k$ of the neighboring sites chosen at random without replacement.  
For more details see Griffeath \cite{G78}. 

In the case $p_0=0$ this pair of dual processes has been studied by Cooper, Radzik, and Rivera \cite{CRR}. In their situation the zealot voter model 
is called a biased infection with a persistent source (BIPS). The phrase persistent source refers to the fact that the BIPS model has one individual that stays infected forever. Their main interest is in the cover time for COBRA, i.e., the time for the process to visit all of the sites. By duality this is related to time for the BIPS to reach all 1's.

In this paper, when we say that a process {\it survives} we mean that with positive probability it avoids becoming $\emptyset$.
We say a process {\it survives locally} if with positive probability the root $0$ is occupied infinitely many times.

When $A=B=\{0\}$, (\ref{dual1}) implies 
\beq
P( 0 \in \xi^0_t ) = P( 0 \in \zeta^0_t )
\label{localsurv}
\eeq
so local survival of one process implies local survival of the other. Taking one of the sets $={\cal T}$ and the other $=\{0\}$ we get 
$$
P( \xi^0_t  \neq \emptyset ) = P( 0 \in \zeta^{\cal T}_t ) \qquad P( \zeta^0_t  \neq \emptyset ) = P( 0 \in \xi^{\cal T}_t ) 
$$
so survival of one process implies that the other has a nontrivial stationary distribution obtained by letting $t\to\infty$ in 
$\zeta^{\cal T}_t$ or $\xi^{\cal T}_t$. Our first result is very general. 

\begin{theorem}\label{genthm}
On any tree with degrees $3 \le d(x) \le M$, the zealot voter model survives if 
$$
\sum_{k\geq 2}(k-1)p_k- p_0>0.
$$
\end{theorem}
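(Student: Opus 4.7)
The plan is to use duality to shift the question to the dual COBRA and then to exhibit a supercritical branching process embedded in it. By the duality formula $P(\xi^A_t\cap B\neq\emptyset)=P(A\cap\zeta^B_t\neq\emptyset)$ together with the additivity of the COBRA, it suffices to show that the COBRA starting from a single particle at the root, $\zeta^{\{0\}}_t$, survives: indeed $P(\zeta^{\{0\}}_t\neq\emptyset)=P(0\in\xi^{\mathcal{T}}_t)$, so if this quantity does not tend to $0$ then $\xi^{\mathcal{T}}_t$ has a nontrivial upper invariant measure, and the zealot voter model survives in the sense stated. A short algebraic rearrangement shows that the hypothesis is equivalent to $\bar k:=\sum_{k}k p_k>1$, so the goal becomes: prove that $\zeta^{\{0\}}_t$ survives whenever $\bar k>1$.

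The natural tool is comparison with a non-coalescing branching process. Root $\mathcal{T}$ at $0$ and, starting from a single particle there, extract a sub-process of $\zeta^0$ by keeping only those offspring that land at children of the branching vertex; because the retained descendants occupy disjoint subtrees they cannot coalesce, so what remains is a genuine multi-type Galton-Watson process dominated by $\zeta^0$. A direct rate computation gives, for a particle at a vertex $x$ of depth $\ge 1$, mean offspring per event equal to $(d(x)-1)\bar k/[p_0+d(x)(1-p_0)]$. If this mean exceeds $1$ at every vertex, the sub-process is supercritical, survives with positive probability, and therefore so does $\zeta^0$.

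The main obstacle is that this forward-only pruning discards an approximate fraction $1/d(x)$ of the offspring and can drop below criticality when $d_{\min}=3$ and $\bar k$ is only slightly above $1$. To recover the full threshold $\bar k>1$ one needs a sharper construction. Two natural candidates are: (i)~a multi-step branching comparison in which lineages are allowed to backtrack once and then re-spread, recovering the discarded mass; or (ii)~a renormalization/block argument showing that, examined on a sufficiently large space-time window, $\zeta^0$ stochastically dominates a supercritical $1$-dependent oriented percolation on $\mathcal{T}\times\mathbb{N}$---with probability bounded away from $0$, by time $T$ the process produces at least two sufficiently separated descendants which then evolve approximately independently, and iterating yields survival. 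Verifying a quantitative block estimate that is sharp enough to work whenever $\bar k>1$ is where I expect the bulk of the technical work to lie.
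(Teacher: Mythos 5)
Your reduction to the dual has a logical gap. Showing that the COBRA started from a single particle survives establishes that $P(\zeta^0_t\neq\emptyset)$ stays bounded away from zero, and by duality this equals $P(0\in\xi^{\mathcal{T}}_t)$, i.e.\ the zealot voter model has a nontrivial upper invariant measure. But the theorem asserts \emph{survival}, which the paper defines as $P(\xi^0_t\neq\emptyset\ \forall t)>0$; by duality this equals $\lim_t P(0\in\zeta^{\mathcal{T}}_t)$, i.e.\ nontriviality of the COBRA's upper invariant measure, which requires controlling the dual started from \emph{all} sites, not from a single site. The paper is careful about exactly this distinction (``survival of one process implies that the other has a nontrivial stationary distribution'' --- note the asymmetry), and accordingly its proof of Theorem 1 works entirely on the forward process $\xi^0_t$, never passing to the dual. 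Your proposed identity ``$\xi^{\mathcal{T}}_t$ has a nontrivial upper invariant measure $\Rightarrow$ the zealot voter model survives in the sense stated'' is precisely the converse implication, and it is not established anywhere.

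Even for the statement you reduce to, you correctly diagnose that the forward-only pruning of the COBRA into non-coalescing subtrees is lossy by a factor of roughly $(d(x)-1)/d(x)$, which for $d_{\min}=3$ and $p_0=0$ raises the apparent threshold to $\bar k>3/2$ rather than the claimed $\bar k>1$; you then explicitly leave the sharpening (``renormalization/block argument'') to future work, so the argument is incomplete where it matters. The paper takes a genuinely different route that circumvents this loss: it derives a differential inequality $\frac{d}{dt}E|A_t|\ge \gamma E|A_t|$ directly for the zealot set $A_t=\{x:\xi_t(x)=1\}$, with $\gamma=-p_0+\sum_{k\ge 2}(k-1)p_k$. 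The key is the purely combinatorial Lemma~\ref{lem1}, $|H(A)|\ge |A|$, bounding the exterior boundary of any finite set on a tree of minimum degree $3$; the drift term in the ODE is then computed over boundary sites $x\in H(A_t)$ whose parent is already occupied, which recovers the coefficient $(k-1)p_k$ per offspring parameter $k$ with no degree-dependent factor. Finally a truncated process $\bar A_t$ and a block/branching comparison ($Z_n$ = frontier vertices in disjoint subtrees, checked supercritical with bounded conditional variance) upgrade the mean growth to almost-sure survival with positive probability. That boundary lemma is the missing idea your sketch would need, and it is what makes the threshold come out exactly at $\bar k>1$.
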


\noindent
The result is proved by comparing the growth of the process at the ``frontier'' with a branching process. For the definition of frontier, see the text before Lemma \ref{lem1}. Note that the degree distribution does not appear in the condition.

\subsection{Results for $d-$regular Trees}\label{d-trintro}

Let $\beta = 1 - (d-1)^{-2}$ be the probability that two independent random walks on the $d$-regular tree that start at distance two never hit.
See Lemma \ref{nohit} for a proof of this.

\begin{theorem} \label{ddieA}
On a d-regular tree the COBRA dies out if 
\beq
d\beta\sum_{k\geq 2}(k-1)p_k-p_0<0.
\label{cdie}
\eeq
When this holds the zealot voter model does not have a nontrivial stationary distribution.
\end{theorem}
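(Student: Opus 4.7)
The goal is to show $\EE|\zeta^0_t|\to 0$ exponentially fast and then invoke the duality to rule out a nontrivial stationary distribution. First, I would represent the COBRA via tagged lineages: starting from one particle at the root, at every branching event producing $k$ offspring at neighbors $y_1,\ldots,y_k$ of $x$, declare one offspring (chosen uniformly) to continue the current lineage and the other $k-1$ to start new child lineages. Each lineage's position then traces a continuous-time simple random walk on $T_d$ (jumping to a uniform random neighbor at rate $d(1-p_0)$) that is killed at rate $p_0$, and a child lineage is always born at distance $2$ from its parent since both occupy neighbors of the now-vacant branching vertex. The per-lineage spawning rate of children is $d\sum_{k\ge 2}(k-1)p_k$.

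The key input is Lemma~\ref{nohit}: two independent simple random walks on $T_d$ starting at distance $2$ never meet with probability $\beta$, so in the COBRA each parent-child lineage pair coalesces with probability $1-\beta$. I would dominate $(|\zeta^0_t|)$ by a thinned branching process $\tilde L_t$ in which each lineage dies at rate $p_0$, spawns children at rate $d\sum_{k\ge 2}(k-1)p_k$, and each child is independently retained with probability $\beta$. Its Malthusian exponent is exactly $d\beta\sum_{k\ge 2}(k-1)p_k-p_0<0$ by hypothesis, so $\EE\tilde L_t\to 0$ exponentially, and the coupling then yields $\EE|\zeta^0_t|\le \EE\tilde L_t\to 0$. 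The main obstacle is that particle deaths can prevent parent-child meetings, making the actual coalescence probability strictly less than $1-\beta$ when $p_0>0$; I would address this by identifying coalescences between a child lineage and \emph{any} descendant of the parent's subtree in the uncoalesced branching random walk, which restores at least a $1-\beta$ fraction of effective coalescences per spawning event. An alternative route would be a direct first-moment hierarchy for $\EE|\zeta^0_t|$ coupled with the two-point function $\EE|\{(x,y):x\sim y,\ x,y\in\zeta^0_t\}|$, closed at the first level via the Lemma.

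Finally, once $\EE|\zeta^0_t|\to 0$ exponentially, Markov's inequality gives $P(\zeta^0_t\ne\emptyset)\to 0$, so the COBRA dies out. The duality \eqref{dual1} with $A=\{0\}$ and $B=\TT$ then gives $P(0\in\xi^\TT_t)=P(\zeta^0_t\ne\emptyset)\to 0$, so the upper invariant measure obtained from $\xi^\TT_t$ as $t\to\infty$ assigns probability $0$ to every vertex being a zealot. Since every stationary distribution of $\xi_t$ is stochastically dominated by this upper invariant measure, the only stationary distribution is the trivial one concentrated on the all-$0$ configuration.
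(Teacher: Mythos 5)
Your proposal follows the same route as the paper: thin the dual branching random walk by retaining each newly born non-first sibling with probability $\beta$, argue that the resulting branching process has Malthusian parameter $d\beta\sum_{k\ge 2}(k-1)p_k-p_0<0$, and then transfer extinction of the COBRA to triviality of the voter model's stationary distribution via duality. Your duality step at the end matches the paper (it is the content of Theorem~\ref{ddieB} together with the monotone limit defining the upper invariant measure).

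You are right to be suspicious of the $p_0>0$ case, and you correctly identify one issue: when lineages are killed at rate $p_0$, the probability that a newly born sibling at distance $2$ actually meets the parent lineage is strictly \emph{less} than $1-\beta$, so the probability of ``surviving'' (not coalescing) is $\ge\beta$, not $\le\beta$. The paper even writes ``$\ge\beta$'' and then uses $(k-1)\beta$ as if it were an upper bound, which is the wrong direction for a domination argument. However, your proposed repair---crediting a child lineage with coalescence against \emph{any} descendant of the parent's subtree---is not justified: once the parent lineage branches, its offspring sit at distance $\ge 3$ from the child, so each such candidate has a \emph{smaller} meeting probability than $(d-1)^{-2}$, and no bound is given that the union recovers $1-\beta$.

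There is a second, more basic gap that neither your proposal nor the paper addresses, and which is present even when $p_0$ is negligible. Coalescence is not instantaneous: a ``doomed'' offspring still occupies a distinct site, and contributes to $|\zeta^0_t|$, for the entire (random, possibly long) period before its walk actually meets the parent's walk. Deleting doomed offspring \emph{at birth}, which is what the thinned process $\tilde L_t$ does, therefore produces a \emph{lower} bound on the restricted COBRA (the one in which only parent--sibling coalescences are kept), not the stochastic upper bound your coupling requires. As a result, the key inequality $\EE|\zeta^0_t|\le\EE\tilde L_t$ is asserted but not established, and the ODE $\frac{d}{dt}\EE\eta^0_t=[-p_0+d\beta\sum_k(k-1)p_k]\EE\eta^0_t$ is the rate equation for the instantly-thinned branching process, not for the restricted COBRA that actually dominates $|\zeta^0_t|$. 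To close this gap one would need to control the number of pending (born-but-not-yet-coalesced) pairs, for instance by a Gronwall-type estimate using the exponential tail of the conditioned meeting time of the difference walk, or to carry out the two-point-function computation you mention as an alternative route; the first-moment thinning argument alone does not deliver the domination.
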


\noindent
To explain the condition, note that in the dual, a particle dies at rate $p_0$ and gives birth to $k$ particles at rate $dp_k$. To get an upper bound on the growth of the dual (i) we ignore coalescence between individuals that are not siblings, and (ii) if $k$ particles are born we number
them $1, 2, \ldots k$ and ignore coalescence between particles $i>1$ and $j>1$. This gives an upper bound on the dual COBRA.

\begin{theorem}\label{ddieB}
If \eqref{cdie} holds then the zealot voter model dies out on a d-regular tree.
\end{theorem}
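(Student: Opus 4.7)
The strategy is to deduce Theorem \ref{ddieB} directly from the branching-process bound behind Theorem \ref{ddieA} by using vertex transitivity of the $d$-regular tree to identify the mean size of $\xi^0_t$ with the mean size of the dual $\zeta^0_t$.

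I would first establish the identity $E|\xi^0_t|=E|\zeta^0_t|$ as follows. Duality \eqref{dual1} with $A=\{0\}$ and $B=\{y\}$ gives $P(y\in\xi^0_t)=P(0\in\zeta^y_t)$ for every site $y\in\mathcal{T}$. Because the automorphism group of the $d$-regular tree contains, for each pair of vertices, an involution swapping them (for $u,v$ at even distance the midpoint fixes; for odd distance the middle edge flips), and because the graphical construction depends only on the constant degree and uniform, unordered choices of neighbors, the distribution of the graphical representation is invariant under any such automorphism. Applying the involution swapping $0$ and $y$ to the dual therefore gives $P(0\in\zeta^y_t)=P(y\in\zeta^0_t)$, and summing over $y$ produces the desired identity.

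Next, the proof of Theorem \ref{ddieA} stochastically dominates $|\zeta^0_t|$ by a continuous-time branching process in which each particle dies at rate $p_0$ and, at rate $dp_k$ with $k\ge2$, spawns $k-1$ new lineages, each of which is retained independently with probability $\beta$. The per-particle mean rate of change is $d\beta\sum_{k\ge2}(k-1)p_k-p_0$, which is strictly negative under \eqref{cdie}, so the dominating branching process is subcritical and its mean decays exponentially in $t$. Combining this with the identity above yields $E|\xi^0_t|\to 0$ exponentially, whence Markov's inequality gives
\[
P(\xi^0_t\ne\emptyset)\;\le\;E|\xi^0_t|\;=\;E|\zeta^0_t|\;\longrightarrow\;0.
\]
For an arbitrary finite initial set $A$, additivity gives $\xi^A_t=\bigcup_{x\in A}\xi^x_t$, so the union bound and vertex transitivity upgrade this to $P(\xi^A_t\ne\emptyset)\le|A|\cdot P(\xi^0_t\ne\emptyset)\to 0$.

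The only delicate step is the symmetry argument identifying $P(0\in\zeta^y_t)$ with $P(y\in\zeta^0_t)$: while the manipulation is elementary, it rests on the distributional invariance of the graphical representation under tree automorphisms, and this is the only place in the proof where $d$-regularity enters nontrivially. Once this identity is in hand, the theorem follows without additional calculation from the subcriticality already exhibited in Theorem \ref{ddieA}.
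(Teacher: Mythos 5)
Your proof is correct and follows essentially the same route as the paper's. The paper writes $E|\zeta^0_t| = \sum_x P(x\in\zeta^0_t) = \sum_x P(0\in\zeta^x_t) \ge P(0\in\zeta^1_t) = P(\xi^0_t\neq\emptyset)$ (the inequality being a union bound via additivity, the last equality being duality), while you apply duality term-by-term to get $\sum_x P(0\in\zeta^x_t) = E|\xi^0_t|$ and then invoke Markov; these are the same estimate seen from opposite sides of the duality relation, and you usefully make the tree-automorphism justification for the symmetry step explicit where the paper simply says ``by symmetry.''
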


\begin{proof} 
Theorem \ref{ddieA} is proved by showing the expected number of particles in the COBRA, denoted as $E|\zeta^0_t|$, converges to $0$ as $t\to \infty$. By symmetry,
$$
E|\zeta^0_t|=\sum_x P(x\in \zeta^0_t)=\sum_x P(0\in \zeta^x_t)\ge P(0\in\zeta^1_t)
$$ 
where $\zeta^1_t$ is the COBRA starting with all sites occupied. The last property follows from the additivity of processes constructed on a graphical representation, i.e., if $A = \cup_i A_i$, a finite or infinite union, then
$$
\xi_t^A = \cup_i \xi_t^{A_i}
$$
This implies that if \eqref{cdie} holds then COBRA has no stationary distribution, and by duality the zealot voter model dies out.
\end{proof}

To study the local survival of our voter model, we use \eqref{localsurv}
to change the problem to studying the local survival of the COBRA. 
Let $\mu = \sum_k kp_k$ is the mean number of offspring in the dual process

\begin{theorem}\label{ddlocal}
Given a d-regular tree $T$, the zealot voter model dies out locally if 
$$
\mu<\frac{d(1-p_0)+p_0}{2\sqrt{d-1}}.
$$
If $p_0=0$ this is $\mu < d/(2\sqrt{d-1})$.
\end{theorem}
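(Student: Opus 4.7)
The plan is to use the duality \eqref{localsurv} to reduce local survival of the zealot voter model at $0$ to local survival at $0$ of the dual COBRA $\zeta^0_t$, and then bound the latter by a weighted-sum supermartingale estimate on the non-coalescing branching random walk $\bar\zeta_t$ obtained (as in the proof of Theorem \ref{ddieA}) by ignoring all coalescence. The graphical coupling gives $P(0 \in \zeta^0_t) \le P(0 \in \bar\zeta_t) \le E|\bar\zeta_t \cap \{0\}|$.

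Because $\bar\zeta_t$ is linear, $m(x,t) := E|\bar\zeta_t \cap \{x\}|$ satisfies
\beq
\partial_t m(x,t) = -A\, m(x,t) + \mu \sum_{y \sim x} m(y,t),
\eeq
with $A = p_0 + d(1-p_0)$. I would test against the weight $\phi(x) = (d-1)^{-|x|/2}$, where $|x|$ is the distance to the root: this is the positive eigenfunction of the adjacency operator on the $d$-regular tree corresponding to its $\ell^2$ spectral radius $2\sqrt{d-1}$. For $|x| = n \ge 1$, the identity $\phi(n-1) + (d-1)\phi(n+1) = 2\sqrt{d-1}\,\phi(n)$ makes the bracket at $x$ equal $-\gamma\,\phi(x)$, where $\gamma := A - 2\mu\sqrt{d-1} > 0$ by hypothesis. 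At the root the analogous computation gives $\sum_{y \sim 0}\phi(y) = d/\sqrt{d-1}$, which is at most $2\sqrt{d-1}\,\phi(0)$ because $d \ge 3$. So with $F(t) := \sum_x \phi(x) m(x,t)$ one gets $F'(t) \le -\gamma F(t)$ and hence $m(0,t) \le F(t) \le e^{-\gamma t}$.

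Integrating and applying Fubini then yields $E\!\int_0^\infty 1_{\{0 \in \zeta^0_t\}}\,dt \le \int_0^\infty e^{-\gamma t}\,dt < \infty$, so the random Lebesgue measure of $\{t : 0 \in \zeta^0_t\}$ is a.s.\ finite. In the coalescing dual at most one particle occupies the origin at a time, and each visit lasts an $\mathrm{Exp}(A)$ holding time (the waiting time for the first event at $0$) independent of the past given occupancy. By the conditional second Borel--Cantelli lemma, infinitely many visits would produce infinitely many holding times exceeding, say, $1/(2A)$, forcing infinite total occupation. So there are a.s.\ only finitely many visits to the origin, which by \eqref{localsurv} is local extinction for $\xi_t^0$.

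The main obstacle is isolating the right test function $\phi$ and checking that the boundary contribution at the root does not spoil the supermartingale inequality. Both come from the critical eigenvalue computation $(d-1)\mu\,r^2 - A\,r + \mu = 0$: its double root $r = 1/\sqrt{d-1}$, occurring exactly when $A = 2\mu\sqrt{d-1}$, identifies $\phi(x) = (d-1)^{-|x|/2}$ and produces the numerical threshold in the statement, while the geometric inequality $d/\sqrt{d-1} \le 2\sqrt{d-1}$ for $d \ge 3$ is what lets the boundary correction at $0$ go in the favorable direction.
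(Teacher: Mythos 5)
Your proposal is correct, and it takes a genuinely different route through the middle of the argument. You and the paper share the same setup: by duality \eqref{localsurv} local survival of $\xi^0$ is equivalent to local survival of the COBRA $\zeta^0$, which is dominated by the non-coalescing branching random walk whose mean occupation $m(x,t)$ satisfies $\partial_t m(x,t) = -\alpha\,m(x,t) + \mu\sum_{y\sim x}m(y,t)$ with $\alpha = p_0 + d(1-p_0)$. From there the paper uses a Feynman--Kac representation $m(t,x) = e^{(d\mu-\alpha)t}\,P(S^0_t = x)$ (the paper misprints the exponent as $(\mu-\alpha)d$, but the conclusion is correct), couples the distance-to-root process to a biased walk on $\mathbb{Z}$, and bounds the return probability by an exponential-moment/Chernoff optimization; the factor $2\sqrt{d-1}$ emerges as the value at the optimal tilt. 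You instead use the $\ell^2$-eigenfunction $\phi(x) = (d-1)^{-|x|/2}$ of the tree adjacency operator directly as a Lyapunov weight: the eigenvalue identity $\phi(n-1) + (d-1)\phi(n+1) = 2\sqrt{d-1}\,\phi(n)$, plus the favorable boundary inequality $d/\sqrt{d-1} \le 2\sqrt{d-1}$ for $d\ge 3$, gives $F(t) = \sum_x \phi(x)m(x,t) \le e^{-\gamma t}$ and hence $m(0,t)\le e^{-\gamma t}$. Both arguments are really extracting the same spectral radius $2\sqrt{d-1}$, but yours avoids the detour through the random walk and its coupling to $\mathbb{Z}$, at the cost of needing to know the eigenfunction in advance; the only unstated technicality is that $F(t)<\infty$, which holds since $\phi\le 1$ and $E|\bar\zeta_t|<\infty$. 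You also supply a finishing step that the paper elides: the paper stops at $P(0\in\zeta^0_t)\to 0$, which by itself does not yield almost sure local extinction (reverse Fatou goes the wrong way). Your integration of the exponential bound, Fubini, and the observation that each visit to the origin carries an $\mathrm{Exp}(\alpha)$ holding time legitimately converts finite expected occupation time into almost surely finitely many visits, which is what the definition of local extinction actually requires. So your proof is both a valid alternative and, on that last point, more careful than the paper's.
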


\noindent
This result is proved by comparing COBRA with a branching random walk by ignoring coalescence. The second bound is sharp for the branching random walk with no death. That is, the corresponding branching random walk visits the root with positive probability if $\mu> d/(2\sqrt{d-1})$ and that the root is visited finitely many times if $\mu < d/(2\sqrt{d-1})$. This result can be found in Pemantle and Stacey \cite{PemSta}. There they studied the branching random walk on trees where each particle gives birth at a rate $\lambda$ independently onto each neighbor, and  dies at rate 1. Since our branching process has simultaneous births and deaths we modify their proof to cover our situation and give the proof in Lemma \ref{LSbrw}.

To give sufficient conditions for local survival, we follow a tagged particle in the COBRA. If there is a particle produced on the site closer to the root, we follow this particle; otherwise we follow a new particle chosen uniformly at random from the offspring and ignore the rest. The recurrence of the tagged particle implies the local survival of COBRA. Using this idea leads to a simple proof of a condition for local survival, but the result is not very accurate.

\begin{theorem}\label{qpf}
 On a $d$-regular tree the zealot voter model survives locally if $p_0=0$ and $\mu > d/2$.
\end{theorem}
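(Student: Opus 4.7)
The plan is to realize the tagged-particle argument announced before the statement. Construct the COBRA $\zeta^0_t$ on the graphical representation starting from a single particle at the root, and inside it carry a single ``tagged'' particle $X_t$ by the following rule. Whenever the site currently holding the tag fires a birth mark that selects $k$ neighbors, the tag passes to the offspring at the parent of $X_{t^-}$ if that parent is among the $k$ chosen neighbors, and otherwise is reassigned to a uniformly chosen offspring among the $k$. Because $p_0=0$, every firing yields at least one offspring, so the tag is always well-defined and $X_t$ is always an actual COBRA particle. Hence $\{t : 0 \in \zeta^0_t\} \supseteq \{t : X_t = 0\}$, and by the duality \eqref{localsurv} it suffices to show that $X_t$ returns to $0$ infinitely often with positive probability.

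The distance to the root $D_t := d(X_t,0)$ is a reflecting continuous-time birth-and-death chain on $\mathbb{Z}_{\ge 0}$. At a vertex of depth $n\ge 1$ exactly one of the $d$ neighbors is the parent, so when $k$ neighbors are sampled without replacement the parent is selected with probability $k/d$. Summing over $k\ge 1$, the tag moves toward the root at total rate
\[
\sum_{k\ge 1} d\, p_k\cdot\frac{k}{d}=\mu,
\]
and away from the root at total rate $d-\mu$. At the root every neighbor is a child, so reflection is automatic. Thus the embedded discrete skeleton of $D_t$ steps down with probability $\mu/d$ and up with probability $(d-\mu)/d$; the hypothesis $\mu > d/2$ gives strictly negative drift, so the skeleton is a reflected random walk that is positive recurrent on $\mathbb{Z}_{\ge 0}$ and returns to $0$ infinitely often almost surely. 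Since the total event rate is the constant $d$, this transfers to infinitely many continuous-time visits and yields local survival.

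I do not anticipate any serious obstacle: the tagged-particle construction keeps the problem one-dimensional and a standard recurrence criterion for birth-and-death chains finishes the job. The only cost is that the bound is loose, because we discard all but one offspring at each branching event; this is what the authors signal by describing the approach as ``simple'' but ``not very accurate,'' producing a factor-$2$ gap relative to the branching-random-walk bound in Theorem \ref{ddlocal}.
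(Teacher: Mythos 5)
Your proposal is correct and follows essentially the same route as the paper: you define the tagged particle exactly as the paper does (pass the tag to the offspring at the parent if present, else to a uniform offspring), compute that the parent is selected with probability $k/d$ so that the tag steps toward the root with probability $\mu/d$ at each branching event, and conclude positive recurrence of the resulting reflecting birth-and-death chain when $\mu > d/2$. You spell out a couple of points the paper leaves implicit (that $p_0=0$ guarantees the tag never vanishes, and that the root acts as a reflecting boundary), which is welcome, but the argument is the same.
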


\begin{proof}
Note that if $i$ is the number of particles produced in a branching event and $q_i$
is the probability all of them going further from to the root then
$$
q_i = \frac{ \binom{d-1}{k} }{ \binom {d}{k} } = \frac{ (d-1)!}{ k! (d-1-k)! } \cdot \frac{k! (d-k)!} {k!} = \frac{d-k}{d}
$$
Thus if we follow the particle that gets closer to the root then it jumps by $-1$ with probability
$$
\sum_k p_k \frac{k}{d} = \frac{\mu}{d}
$$
and the tagged particle will be positive recurrent if $\mu > d/2$.
\end{proof}

Our next Theorem, which uses some ideas from the proof of Lemma 4.57 in Liggett's 1999 book \cite{L99},  gives a more precise result.

\begin{theorem}\label{dslocalbbd}
On a $d$-regular tree the zealot voter model survives locally if $p_0=0$ and 
$$
\mu>\frac{d}{\sqrt{d-1}+1}.
$$
\end{theorem}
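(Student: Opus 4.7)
By the duality identity \eqref{localsurv}, local survival of the zealot voter model at $0$ is equivalent to showing that, with positive probability, $0\in\zeta^{0}_t$ for arbitrarily large $t$, so I work with the COBRA throughout. The strategy is to sharpen the tagged-particle construction used in the proof of Theorem~\ref{qpf}. The weakness there is that at a branching event producing $k$ offspring in which the parent-toward-root is \emph{not} chosen, only one of the $k$ outward children is kept as the tagged particle and $k-1$ are discarded; this wastes most of the branching and forces the overly strong condition $\mu>d/2$. I would instead keep \emph{all} $k$ outward offspring as independent tagged sub-lineages in that case, and tag only the root-ward offspring when it is present (conditional probability $k/d$). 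The resulting tagged population is a branching random walk on the tree, which, projected to the distance from the root, is a branching walk on $\mathbb{Z}_{\ge 0}$ in which a tagged particle at level $n\ge1$ moves to level $n-1$ at rate $\mu$ and is replaced by $k$ tagged particles at level $n+1$ at total rate $d-\mu$ (with $k$ distributed according to the conditional offspring law given that the parent was not chosen).

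Let $q$ be the probability that a single tagged particle starting at level $1$ has some tagged descendant that ever reaches level $0$. Because the $k$ outward children of a common branching event sit in \emph{disjoint} subtrees of their parent, their sub-lineages evolve independently in COBRA until one of them returns to the parent. Conditioning on the first event of the tagged lineage and using the random-walk return estimate $1/(d-1)^n$ of Lemma~\ref{nohit} (so that a return from level $2$ to level $0$ factors as two independent returns from level $1$, contributing a $q^2$) yields a fixed-point equation
\[
q \;=\; \frac{\mu}{d} \;+\; \sum_{k\ge1}\frac{d-k}{d}\,p_k\,\bigl(1-(1-q^2)^k\bigr).
\]
Analyzing this equation by the generating-function method of Liggett's Lemma~4.57, in which the spectral factor $\sqrt{d-1}$ from simple random walk on the $d$-regular tree enters through the characteristic recurrence for level-to-level passage, produces a positive solution $q$ with $\mu q>1$ precisely when $\mu(\sqrt{d-1}+1)>d$, i.e.\ $\mu>d/(\sqrt{d-1}+1)$.

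Once $\mu q>1$ is established, a Galton--Watson process embedded at successive tagged visits to the root, with offspring mean $\mu q$, is supercritical and so survives with positive probability, giving infinitely many tagged root-visits. Since every tagged particle is a genuine COBRA particle, this forces $0\in\zeta^{0}_t$ infinitely often on the survival event, completing the proof via duality.

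\textbf{Main obstacle.} The delicate point is justifying the fixed-point equation for $q$ in the presence of COBRA's coalescence, and extracting the sharp threshold $d/(\sqrt{d-1}+1)$ rather than the weaker BRW bound $d/(2\sqrt{d-1})$ of Lemma~\ref{LSbrw} or the too-strong bound $d/2$ of Theorem~\ref{qpf}. The key observation, following Liggett's template, is that outward sub-lineages in distinct subtrees of a common parent can only coalesce by first returning to that parent, and any such return already represents progress toward the root; hence coalescence does not reduce $q$ below the value predicted by the independent-subtrees calculation. Translating this observation into a clean generating-function inequality, and optimizing it to recover the factor $\sqrt{d-1}+1$, is the technical heart of the argument.
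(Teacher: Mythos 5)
Your proposal takes a genuinely different route from the paper, and it contains gaps that I don't think can be repaired without essentially reverting to the paper's strategy.

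\textbf{The paper's structure.} The paper splits the argument into two independent pieces. First, it defines $u(n)=P(e_n\in\zeta_t \text{ for some } t)$, notes $u$ is supermultiplicative, sets $\beta(\mu)=\lim u(n)^{1/n}$, and restricts to the subtree process $\bar\zeta_t$. Lemma~\ref{lsv1} (Liggett's Prop.~4.57 template) then shows $\beta(\mu)>1/\sqrt{d-1}$ implies $\inf_t P(e_0\in\bar\zeta_t)>0$; the embedded branching process there has offspring mean $\approx (d-1)^n a^n$, where the crucial factor $(d-1)^n$ counts the number of vertices at distance $n$ in a subtree, and the bound $f'(0)\approx \epsilon^2\bigl(a^2(d-1)\bigr)^{nj}>1$ is exactly where $\sqrt{d-1}$ enters. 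Second, Lemma~\ref{muontr} uses a \emph{single} tagged particle (biased walk on $\mathbb{Z}$) to get $\beta(\mu)\ge \mu/(d-\mu)$, and $\mu>d/(\sqrt{d-1}+1)$ translates to $\mu/(d-\mu)>1/\sqrt{d-1}$. Your approach has no analogue of the $(d-1)^n$ amplification, so it cannot reproduce the $\sqrt{d-1}$ by local reasoning at branching events alone.

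\textbf{Gap 1: the fixed-point equation is on the wrong side.} Your equation $q=\frac{\mu}{d}+\sum_k\frac{d-k}{d}p_k\bigl(1-(1-q^2)^k\bigr)$ treats the $k$ outward offspring as having \emph{independent} ``reach level $0$ from level $2$'' events, each of probability $q^2$. That is a branching-random-walk (no coalescence) computation. For the COBRA, these events use the same graphical representation and are positively correlated, so $P(\cup_i E_i)\le 1-\prod_i\bigl(1-P(E_i)\bigr)$; the independence formula \emph{overestimates} the union probability. Since the COBRA occupies a subset of the sites the BRW occupies, $q_{\mathrm{COBRA}}\le q_{\mathrm{BRW}}$, so a lower bound for $q_{\mathrm{BRW}}$ does not lower-bound $q_{\mathrm{COBRA}}$, and the embedded Galton--Watson argument you build on top is not justified for the COBRA.

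\textbf{Gap 2: even the BRW recursion is not what you wrote.} In disjoint subtrees of the parent, only the events $F_i=\{\text{sub-lineage in }S_i\text{ reaches the parent}\}$ are genuinely independent (each with probability $q$). Once one of them reaches the parent, continuing from the parent to the root is a \emph{single} further event, contributing one extra factor $q$, not $k$ independent $q^2$'s. The correct COBRA-safe lower bound is $q\ge\frac{\mu}{d}+\sum_k\frac{d-k}{d}p_k\bigl(1-(1-q)^k\bigr)q$, which is strictly smaller than your $1-(1-q^2)^k$ for $k\ge2$. Your displayed recursion therefore overshoots.

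\textbf{Gap 3: the threshold does not emerge.} You state that analyzing the fixed-point ``produces a positive solution $q$ with $\mu q>1$ precisely when $\mu(\sqrt{d-1}+1)>d$,'' but no derivation is offered, and it cannot be right as stated: the fixed-point depends on the entire distribution $\{p_k\}$, not just $\mu$. For $d=4$, the displayed equation with $p_1=0.6$, $p_2=0.4$ (so $\mu=1.4$) has no solution below $1$, giving $\mu q=1.4>1$, while with $p_1=0.8$, $p_3=0.2$ (same $\mu=1.4$) the smallest solution is around $0.62$ and $\mu q<1$ --- two different verdicts for the same $\mu<d/(\sqrt{d-1}+1)$. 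The invocation of ``Liggett's Lemma~4.57'' to extract $\sqrt{d-1}$ does not match your setup: Liggett's argument works because lineages are kept in disjoint subtrees and multiplied by the $(d-1)^n$ site count, not by solving a local offspring fixed-point equation.

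In short, your tagged-population idea and the fixed-point equation do not correctly lower-bound the COBRA return probability, the ``coalescence helps'' heuristic points in the wrong direction, and the claimed equivalence with $\mu>d/(\sqrt{d-1}+1)$ is unsubstantiated. To get the paper's threshold you need the supermultiplicativity of $u(n)$, the restriction to a subtree, and the embedded branching process with offspring mean $\sim(d-1)^na^n$.
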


\noindent
Combining this with Theorem \ref{ddlocal}, we notice that when $p_0=0$ the phase transition of local survival $\mu_l$ satisfies
$$  
\mu_l\in\left[\frac{d}{2\sqrt{d-1}},\ \frac{d}{1+\sqrt{d-1}}\right]
$$

\subsection{Results for Galton-Watson Trees}

In a Galton-Watson process with $Z_0=1$ each individual in generation $n$ has an independent and identically distributed number of children,
which are members of generation $n+1$.
The Galton-Watson tree is the genealogy of this process. The one member of generation 0 is the root.
Edges are drawn from each individual in generation $n$ to their children.
Let $p_k$ be the probability of $k$ children. We have assumed $p_k=0$ unless $3 \le d_{min} \le k \le M$, so the tree is infinite with 
probability 1, and all vertices have at most $M$ children. 

To prove an analogue of Theorem \ref{ddieA}, we formulate our model as a voter model perturbation: let $\bar p_i = \ep p_i$ when $i \neq  1$ 
and choose $\bar p_{1}$ to make the $\bar p_i$ sum to 1. A random walk that jumps to each neighbor at rate 1 
has a reversible stationary distribution that is uniform on
the graph. Let $\pi_m$ be the fraction of vertices in the tree with degree $m$, and let $\mu_{m,k}$ be the expected number of surviving particles in the dual
when we pick $k$ neighbors of a vertex of degree $m$ at random and run the coalescing random walk to time $\infty$.

\begin{theorem}\label{prtbxn} Let $\delta>0$. If $\ep$ is small then the COBRA dies out if 
$$
\sum_m \pi_m \sum_k kp_k (\mu_{m,k}-1) - p_0 < -\delta
$$
and survives if the last quantity is $>\delta$.
\end{theorem}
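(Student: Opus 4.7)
The plan is to follow a voter-model-perturbation strategy adapted to the Galton-Watson tree. Write the rates as $\bar p_1 = 1-\ep(1-p_1)$ and $\bar p_i = \ep p_i$ for $i \neq 1$. In the dual COBRA, the dominant event is then a pure random-walk/coalescing move at rate $d(x)\bar p_1 \approx d(x)$, while true branching (with $k \ge 2$ offspring) occurs at rate $\ep\, d(x) p_k$ and death at rate $\ep p_0$. Rescale time by $1/\ep$ so that branching and death events fire at rate $O(1)$ while each particle's random-walk motion runs at rate $O(1/\ep) \to \infty$; this produces the separation of time scales that drives the rest of the argument.

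Between two consecutive rare events the particles evolve as a system of coalescing random walks on the tree. Because the tree is transient (all degrees are at least $d_{min} \ge 3$), this CRW reaches its asymptotic surviving-particle configuration in $O(1)$ original time, which is negligible on the rescaled scale. Hence just after a branching event at a degree-$m$ vertex producing $k$ offspring at random neighbors, the expected number of surviving offspring tends to $\mu_{m,k}$ as $\ep \to 0$, giving a net contribution of $\mu_{m,k}-1$ to the particle count.

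Next, compute the instantaneous drift of $E|\zeta^0_t|$ by applying the generator of the rescaled process: sum over branching events (with rates built from $p_k$ and $d(x)$), subtract the death rate $p_0$, and average the particle's location over the asymptotic degree distribution $\pi_m$ of the GW tree. After collecting terms the leading-order drift per particle is $\ep\cdot L + o(\ep)$ with $L = \sum_m \pi_m \sum_k k p_k (\mu_{m,k}-1) - p_0$, which is the quantity in the statement. When $L \le -\delta$, exponential decay of $E|\zeta^0_t|$ follows along the same lines as the proof of Theorem \ref{ddieB}, so the COBRA dies out. When $L \ge \delta$, survival follows from a comparison with a supercritical branching process obtained by tracking only the net contribution of successive branching events, in the spirit of the frontier construction used for Theorem \ref{genthm}.

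The main obstacle is making the separation of time scales quantitative. One must show that the coalescing random walks on the random GW subtrees relax to the asymptotic surviving-particle configuration in time $o(1/\ep)$ with a uniform error, so that the contribution of each branching event is genuinely $\mu_{m,k} + o(1)$. This requires escape-probability and coalescence-time bounds on GW trees; the uniformity needed to average the error over the random tree comes from the bounded-degree constraint $d_{min} \le d(x) \le M$. In addition, the distinct rare events must effectively decouple in the limit $\ep \to 0$ so that their effects on the expected particle count combine additively. Establishing these estimates is the technical heart of the argument; once they are in place, matching the perturbed drift to its $\ep \to 0$ limit yields the stated dichotomy.
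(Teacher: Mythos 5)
Your proposal follows essentially the same route as the paper: formulate the model as a voter-model perturbation with $\bar p_1 = 1-\ep(1-p_1)$, $\bar p_i = \ep p_i$ for $i\neq 1$, rescale time by $1/\ep$, exploit transience so that coalescence among newly born particles settles to its asymptotic outcome on a time scale negligible compared to $1/\ep$, and conclude by comparing the rescaled particle count with a branching process whose per-particle drift is $\sum_m \pi_m \sum_k k p_k(\mu_{m,k}-1) - p_0$. The paper itself only sketches this (referring to the techniques of \cite{DZ} and \cite{LV}), and your sketch identifies the same separation-of-time-scales mechanism, the same role of $\pi_m$ and $\mu_{m,k}$, and the same technical obstacle (uniform relaxation of the coalescing walks and decoupling of rare events), so the two arguments agree in substance.
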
  

\noindent
This result can be easily proved using the techniques in \cite{LV}. The key idea is that when $\ep$ is small most of the
steps in the dual are random walk steps, and the random walk is transient, so any coalescence occurs soon after branching, and the dual is essentially
a coalescing branching random walk. These ideas go back to \cite{CDP}, where they were used on $\ZZ^d$ with $d\ge 3$. 
More recent applications include \cite{CDT, LV, IGF2}. The zealot voter model has an additive dual, so things are simpler,
and we can use the approach of \cite{DZ}.  In Section 4 we will provide more details about the method.

\mn
{\bf Remark.} The last result concerns the survival of the dual, which is the same as the existence of a nontrivial stationary distribution
for the zealot voter model.

\medskip
Our next result concerns local survival. Given any Galton-Watson tree $\mathcal{T}^{GW}$, let $M$ denote its maximal degree,
and let ${\cal T}^M$ be the tree in which each vertex has $M$ children. Let $\mu_l(G)$ denote the threshold for local survival of the COBRA on graph $G$. Note the expected number of new born particles at each time are the same on both trees. Since particles on tree $\mathcal{T}^M$ have more tendency to move further away from the root, a simple comparison leads to 
$$
\mu_l\left(\mathcal{T}^{GW}(\eta_t)\right)\le \mu_l\left(\mathcal{T}^M(\eta_t)\right)
$$
where $\eta_t$ is the BRW without coalescence. The comments under Theorem \ref{ddlocal} says for $p_0=0$,
$$
\mu_l\left(\mathcal{T}^M(\eta_t)\right)=M/(2\sqrt{M-1})
$$   
It follows immediately that
\begin{theorem}\label{lcdout}
If $p_0=0$ and $\mu<M/(2\sqrt{M-1})$ then COBRA and the zealot voter model both die out locally.
\end{theorem}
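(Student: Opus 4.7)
The plan is to combine duality with a tree-level comparison, reducing to the already-proved local extinction on the regular tree (Theorem \ref{ddlocal}). By \eqref{localsurv}, the zealot voter model dies out locally if and only if the COBRA $\zeta^0_t$ does. Because coalescence only removes particles, $\zeta^0_t$ is dominated by the pure branching random walk $\eta_t$ obtained by ignoring coalescence in the dual, so by the first-moment bound $P(0 \in \zeta^0_t) \le E[\eta_t(0)]$ and a Borel--Cantelli argument it suffices to prove that $E[\eta_t(0)] \to 0$ exponentially fast on $\mathcal{T}^{GW}$.

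To transfer the decay from $\mathcal{T}^M$ to $\mathcal{T}^{GW}$, I would couple $\eta_t$ on $\mathcal{T}^{GW}$ with a BRW on $\mathcal{T}^M$ via the natural embedding of $\mathcal{T}^{GW}$ into $\mathcal{T}^M$ (attaching phantom children to each vertex of GW-degree less than $M$). Since every vertex of $\mathcal{T}^M$ has $M$ children, offspring on $\mathcal{T}^M$ are more likely to drift away from the root than offspring on $\mathcal{T}^{GW}$, which is the intuition behind the comparison $\mu_l(\mathcal{T}^{GW}(\eta)) \le \mu_l(\mathcal{T}^M(\eta))$ stated just before the theorem. Combined with the regular-tree threshold $\mu_l(\mathcal{T}^M(\eta)) = M/(2\sqrt{M-1})$ coming from Theorem \ref{ddlocal} (whose sharpness is supplied by Lemma \ref{LSbrw} with $d=M$ and $p_0=0$), the hypothesis $\mu < M/(2\sqrt{M-1})$ places us below the local-survival threshold on $\mathcal{T}^{GW}$ as well, yielding local extinction of $\eta_t$, hence of the COBRA, hence of the zealot voter model.

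The main obstacle I expect is making this coupling quantitatively precise, since the two trees have different per-vertex branching rates ($d(x) p_k$ on $\mathcal{T}^{GW}$ versus $M p_k$ on $\mathcal{T}^M$), so Poisson marks cannot simply be synchronized to give pointwise domination at the root. A workable route is to argue directly at the level of the linear mean ODE $\dot n_t(x) = -d(x) n_t(x) + \mu \sum_{y \sim x} n_t(y)$ for $n_t(x) = E[\eta_t(x)]$, producing a super-solution of the form $e^{\lambda t} r^{|x|}$ with $\lambda < 0$ under the hypothesis, tuned to mimic the optimal eigenfunction of the $M$-regular-tree generator (namely $r = 1/\sqrt{M-1}$), and then applying the first-moment decay to conclude local extinction.
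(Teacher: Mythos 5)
Your super-solution calculation breaks down at low-degree vertices, and the breakdown reveals a directional problem that also afflicts the coupling you propose (and, for that matter, the paper's own sketch). Plugging $\bar n(t,x)=e^{\lambda t}r^{|x|}$ into the mean ODE at a non-root vertex gives the requirement
\[
\lambda \ \ge\ -d(x)+\mu\bigl(r^{-1}+(d(x)-1)r\bigr) \ = \ -d(x)(1-\mu r)+\mu(r^{-1}-r).
\]
Since $\mu r<1$, the right side is a \emph{decreasing} function of $d(x)$, so the constraint binds at the \emph{minimum} degree $\delta$, not the maximum $M$. With $r=1/\sqrt{M-1}$ the inequality does force $\lambda<0$ at degree-$M$ vertices, but at a vertex of degree $\delta<M$ it requires $\mu\bigl(\sqrt{M-1}+(\delta-1)/\sqrt{M-1}\bigr)\le\delta$, which fails for $\mu$ close to $M/(2\sqrt{M-1})$. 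Optimizing over $r$ only rescues the weaker statement $\mu<\delta/(2\sqrt{\delta-1})$, a strictly smaller threshold than $M/(2\sqrt{M-1})$ whenever $\delta<M$, because $d\mapsto d/(2\sqrt{d-1})$ is increasing.

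The coupling route has the same logical gap. The comparison $\mu_l(\mathcal{T}^{GW})\le\mu_l(\mathcal{T}^M)$ that you invoke gives an \emph{upper} bound on the Galton--Watson threshold, while the theorem, read contrapositively, asserts $\mu_l(\mathcal{T}^{GW})\ge M/(2\sqrt{M-1})=\mu_l(\mathcal{T}^M)$ --- the reverse inequality. Knowing $\mu<\mu_l(\mathcal{T}^M)$ says nothing about whether $\mu$ lies below the (a priori smaller) $\mu_l(\mathcal{T}^{GW})$. And the monotonicity intuition goes the wrong way for the conclusion: at a low-degree vertex the BRW has a slower death rate \emph{and} fewer escape directions, both of which make local survival easier, so the GW tree should have a strictly smaller local-survival threshold than the $M$-regular tree when $\delta<M$. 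Neither the phantom-children coupling nor the $r^{|x|}$ super-solution can transfer the $M$-regular decay to $\mathcal{T}^{GW}$ under the stated hypothesis; what is actually deliverable by these methods is a comparison with the minimum-degree regular tree (replacing $M$ by $\delta$), or a test function adapted to the degree sequence along each ray.
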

Next we look for conditions implying local survival. On a tree we define the level $\ell_x$ of a vertex $x$ to be its distance to the root.
As on $d-$regular trees, our strategy is to follow a tagged particle and seek conditions guaranteeing its recurrence.  
Let $X_t$ be the level of the tagged particle at time $t$. If $\phi$ is a harmonic function for the tagged particle process $X_t$, 
i.e. $\phi(X_t)$ is a martingale, then it follows from the optional stopping theorem that If $T_0$ is the time to hit the root and $T_N$ is the first time the walk hits a site at level $N$
\begin{equation}\label{har}
\phi(1)\ge\left( \displaystyle \min_{x: \ l_x=N}\phi(x)\right)P_1\left(T_N<T_0\right)
\end{equation}
where the subscript $1$ on $P$ indicates that $X_0$ is at level 1. From \eqref{har}  we see that if $\phi(x)$ goes to $\infty$ along all paths to $\infty$ in the tree, then the tagged particles is recurrent. In order for $\phi$ to be a harmonic function
\begin{equation*}
\phi(x+1)-\phi(x)=\frac{p_x}{1-p_x}\left[ \phi(x)-\phi(x-1)\right]=\frac{\mu}{d(x)-\mu}\left[ \phi(x)-\phi(x-1)\right]
\end{equation*}
where $p_x = \mu/d(x)$ is the probability the tagged particle moves closer to the root. 
Taking logarithms, then this is
\begin{equation*}
\log\left[\phi(x+1)-\phi(x)\right]=\log\left[ \phi(x)-\phi(x-1)\right]+\log\left[ \frac{\mu}{d(x)-\mu}\right]
\end{equation*}

As we will now explain, there is a natural mapping from the $\log$-increments of the harmonic function to a branching random walk on $\mathbb{R}$. If we consider a particle at level $x$ to be at $\log\left[ \phi(x)-\phi(x-1)\right]$ on $\RR$ then $d(x)-1$ new particles will be dispersed to 
$$
\log\left[ \phi(x)-\phi(x-1)\right]+\log\left[ \frac{\mu}{d(x)-\mu}\right].
$$
As a result along any genealogical path, the distance between  two consecutive generations  is i.i.d~with law the same as $\log[\mu/(d(x)-\mu)]$. 

This process just described is different from the usual branching random walk in which children are dispersed independently from their parent. However Biggins \cite{B77} has proved results for more general branching random walks that contain ours as a special case. Let $F(t) = E( \zeta(-\infty,t])$ be the expected number of children that lie in $(-\infty,t]$ and define 
the Laplace transform of the mean measure by 
$$
m(\theta) = \int e^{-\theta t} \, dF(t)
$$

\begin{theorem} \label{GWls}
If $\min_{\theta\ge 0}m(\theta)<1$ then the leftmost particle in the branching random walk goes to $\infty$. This implies $\phi$ goes to $\infty$ along all paths to $\infty$ in the tree and we have local survival.
\end{theorem}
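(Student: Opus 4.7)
The plan is to derive the conclusion by the standard Markov-inequality / many-to-one computation that lies at the heart of Biggins' theorem, and then translate back through the harmonic function $\phi$ and the optional stopping bound \eqref{har} to obtain recurrence of the tagged particle, and thus local survival.

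First I set up the BRW explicitly. As described in the lead-up to the theorem, a particle whose associated GW-tree vertex has degree $d(v)$ produces $d(v)-1$ offspring, all placed at displacement $\log[\mu/(d(v)-\mu)]$ from the parent's position. Letting $D$ denote the degree of a typical non-root vertex (so $D-1$ has the GW offspring law), the Laplace transform of the mean offspring measure is
\[
m(\theta) = E\!\left[(D-1)\!\left(\frac{D-\mu}{\mu}\right)^{\!\theta}\right].
\]
By hypothesis there exists $\theta \ge 0$ with $c := m(\theta) < 1$; fix such a $\theta$.

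The core estimate will be a many-to-one computation. Conditioning successively on earlier levels and using that the expected one-step contribution of the offspring of a particle at position $X_u$ equals $e^{-\theta X_u} m(\theta)$, an induction on $n$ yields
\[
E\!\left[\sum_{v:\,\ell_v = n} e^{-\theta X_v}\right] = c^n.
\]
Markov's inequality then gives, for every $a\in\RR$,
\[
P\!\left(\min_{v:\,\ell_v = n} X_v \le a\right) \le e^{\theta a}\,c^n.
\]
Choosing $a = \alpha n$ with $\alpha > 0$ small enough that $e^{\theta\alpha}\,c < 1$ makes the right-hand side summable in $n$, and Borel--Cantelli will give $\min_{v:\,\ell_v = n} X_v \to \infty$ almost surely, in fact at a linear rate. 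This is precisely the conclusion that the leftmost BRW particle tends to $+\infty$.

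To finish I translate back. Since $X_v = \log[\phi(v) - \phi(\text{parent}(v))]$, the previous step yields $\phi(v) - \phi(\text{parent}(v)) \to \infty$ uniformly in $v$ at level $n$; choosing a normalisation for $\phi$ that makes its increments along paths positive, this forces $\min_{x:\,\ell_x=N}\phi(x)\to\infty$ as $N\to\infty$. Inserting this into \eqref{har} gives $P_1(T_N < T_0) \to 0$, so the tagged particle almost surely returns to the root before reaching level $N$, for every $N$; hence the tagged particle is recurrent. Because the tagged particle is always one specific particle inside the true COBRA, recurrence of this single line of descent implies that the COBRA visits the root infinitely often, and \eqref{localsurv} then gives local survival of the zealot voter model.

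The hard part will be the final translation from recurrence of the tagged single-particle chain to genuine local survival of the branching, possibly coalescing COBRA on a random GW tree, and in particular handling the fact that when $p_0>0$ the tagged particle can die in finite time. The BRW Markov-inequality estimate is essentially bookkeeping; the content is in the tagged-particle embedding, which is analogous to the $d$-regular treatment preceding Theorem~\ref{dslocalbbd} and dominates the true COBRA from below.
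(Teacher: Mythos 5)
Your proposal is correct and sound. The translation steps (positions of the BRW at level $n$ being $\log[\phi(x_n)-\phi(x_{n-1})]$, hence $\phi$ increments blowing up, hence $\min_{\ell_x=N}\phi(x)\to\infty$, hence $P_1(T_N<T_0)\to 0$ via the optional stopping bound \eqref{har}, hence recurrence of the tagged particle, hence local survival of COBRA and, by \eqref{localsurv}, of the zealot voter model) match what the paper does. Note that the paper restricts to $p_0=0$ at the start of Section~\ref{GWsec}.2.2, so the worry you flag at the end about the tagged particle dying when $p_0>0$ does not arise here.

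Where you differ from the paper is in the core BRW step. The paper simply invokes Corollary~(3.4) of Biggins~\cite{B77}, which gives the full speed result for the leftmost particle in a general branching random walk, and reads off that $\nu(0)<1$ implies the leftmost particle drifts to $+\infty$. You instead give a direct first-moment argument: the many-to-one identity $E\bigl[\sum_{\ell_v=n}e^{-\theta X_v}\bigr]=m(\theta)^n$ (valid even though all siblings land at the same displacement, since only the expected offspring measure enters), followed by the exponential Markov bound $P(\min_{\ell_v=n}X_v\le \alpha n)\le e^{\theta\alpha n}m(\theta)^n$, then choosing $\alpha$ small enough that $e^{\theta\alpha}m(\theta)<1$ and applying Borel--Cantelli. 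This is more elementary and self-contained than citing Biggins; it buys a proof whose one nontrivial input the reader can check line by line, at the cost of not establishing the sharp linear speed of the minimum (which the theorem does not need). Either way the argument works, and your version could even be preferable in a paper aiming to be self-contained.

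One cosmetic point: you write ``$\phi(v)-\phi(\mathrm{parent}(v))\to\infty$ \emph{uniformly} in $v$ at level $n$''. Strictly the almost-sure statement from Borel--Cantelli is that for a.e.\ realization of the tree there is an $n_0$ such that $\min_{\ell_v=n}X_v\ge \alpha n$ for all $n\ge n_0$; this is exactly what is needed to conclude $\phi(x_n)\to\infty$ along every path, so the conclusion stands — just be careful calling it ``uniform'' without specifying that it is uniform over vertices at a fixed level for $n$ beyond a random threshold.
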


To apply this result to our examples, we begin by noting that 
\begin{equation*}
m(\theta)=\sum_{j\ge 3}q_j(j-1)\left(\frac{j-\mu}{\mu}\right)^\theta
\end{equation*}
It is not easy to use this formula with Theorem \ref{GWls} to get explicit predictions, so we focus on Galton-Watson tree with degrees only 3 and 4. Let $\mu=3q_3+4q_4$ and 
$$
\nu(0)=\min_{\theta\ge 0}m(\theta). 
$$
We have computed the threshold for various $\mu$ in Section \ref{3n4}. See also Figure \ref{fig:nu(0)1}.

\begin{figure}[h] 
  \centering
  \includegraphics[bb=53 58 737 555,width=3.83in,height=2.78in,keepaspectratio]{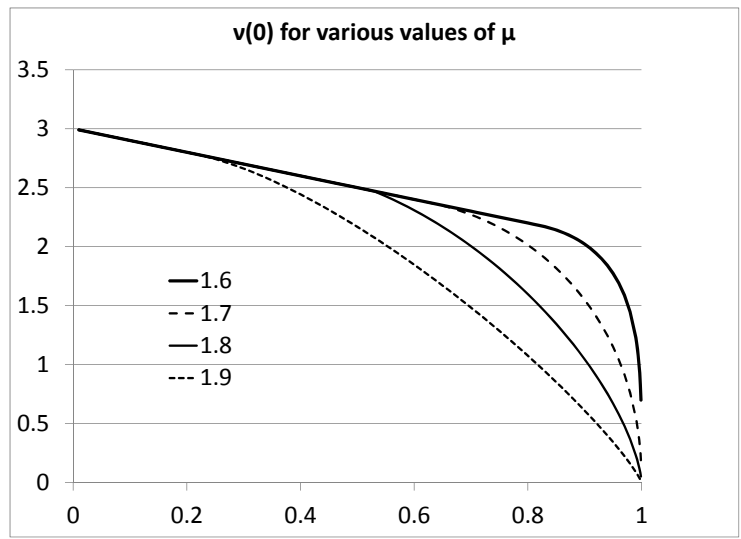}
  \caption{ $\nu(0)$ as a function of $q_3$. Local survival occurs when $q_3\ge0.996$ for $\mu=1.6$; $q_3\ge0.97$ for $\mu=1.7$; $q_3\ge0.91$ for $\mu=1.8$; and $q_3\ge 0.82$ for $\mu=1.9$.}
  \label{fig:nu(0)1}
\end{figure}

\newpage

\section{Proof of Theorem \ref{genthm}}

There are four steps in the proof.

\begin{itemize}
\item
We begin by deriving a differential equation for the expected number of occupied sites. 
\item
We define the frontier and the external boundary of a set of occupied sites and prove lower bounds on their sizes.
\item
Combining the first two steps we obtain differential equations that lower bound the number of occu[pied sites
and the size of the frontier.
\item
We prove Theorem \ref{genthm} by showing that the set of occupied sites dominates a supercritical branching walk. 
\end{itemize} 

\subsection{Step 1: Derivation of the ODE}

Let $d_k(x) = (d(x)-1)\cdots (d(x)-(k-1))$. Note that $d_k(x)$ is the number of ways of picking $k-1$ things out of
$d(x)-1$ when the order of the choices is important. Using $x*(k-1) \neq y_k$ to indicate that we sum over all ordered
choices of $k-1$ different neighbors $y_1,...,y_{k-1}$ of $x$ that are not $\neq y_k$.
\begin{align}
\frac{d}{dt} &\sum_x P(\xi_t(x)=1)=-p_0 \sum_x P(\xi_t(x)=1)
\nonumber \\
&+ \sum_{k>1} \sum_{x,y_k\sim x} \frac{p_k}{d_k(x)}\sum_{x*(k-1)\neq y_k}\left[P\left( \xi_t(x)=1,\xi_t(y_k)=0\right)-P\left( \xi_t(x)=1,\hbox{all}\ \xi_t(y_i)=0\right)\right]
\label{ode}\\
&+ \sum_{k>1} \sum_{x,y_k\sim x} \frac{p_k}{d_k(x)}\sum_{x*(k-1)\neq y_k}P(\xi_t(x)=\xi_t(y_k)=0, \xi_t(y_i)=1 \ \hbox{for some} \ i<k)
\nonumber
\end{align}

\noindent
Note that the second and third terms are $\ge 0$.

\begin{proof}
Breaking things down according to the value of $k$, treating births and deaths separately, and noting that in the last four terms
jumps occur at rate $d(x)$
\begin{align}
\frac{d}{dt}P(\xi_t(x)=1)&=-p_0P(\xi_t(x)=1)
\nonumber\\
&-p_1\sum_{y\sim x}P(\xi_t(x)=1,\xi_t(y)=0)
\nonumber\\
&+p_1\sum_{y\sim x}P(\xi_t(x)=0,\xi_t(y)=1)
\label{step0}\\
&-\sum_{k>1}\sum_x \frac{p_k}{d_k(x)}\sum_{x*k}P\left( \xi_t(x)=1, \xi_t(y_i)=0 \hbox{ for all $1\le i\le k$}  \right)
\nonumber\\
&+\sum_{k>1}\sum_x\frac{p_k}{d_k(x)}\sum_{x*k}P\left(\xi_t(x)=0, \xi_t(y_i)=1 \hbox{ for some $1\le i\le k$} \right)
\nonumber
\end{align}
If we sum over $x$ then the second and third terms cancel. We now fix $k$ and split the last term into two
\begin{align}
&=-\sum_x \frac{p_k}{d_k(x)}\sum_{x*k}P\left( \xi_t(x)=1,\hbox{ all } \xi_t(y_i)=0\right)
\nonumber\\
&+\sum_x\frac{p_k}{d_k(x)}\sum_{x*k} P\left(\xi_t(x)=0, \xi_t(y_k)=1\right)
\label{step1}\\
&+\sum_x\frac{p_k}{d_k(x)}\sum_{x*k}P\left(\xi_t(x)=\xi_t(y_k)=0, \xi_t(y_i)=1 \hbox{ for some $1\le i< k$} \right)
\nonumber
\end{align}
Recalling the definition of $d_k(x)$, the first sum can be written as 
\begin{align*}
&\sum_{x,y_k\sim x}\frac{p_k}{d_k(x)}\sum_{x*(k-1)\neq y_k}P\left( \xi_t(x)=0, \xi_t(y_k)=1\right)\\
= & \sum_{x,y_k\sim x}p_kP\left( \xi_t(x)=0,  \xi_t(y_k)=1\right)\\
=& \sum_{y_k, x \sim y_k}p_kP\left( \xi_t(x)=0,  \xi_t(y_k)=1 \right)\\
= & \sum_{y_k, x \sim y_k} \frac{p_k}{d_k(y_k)}\sum_{y_k*(k-1)\neq x}P\left( \xi_t(x)=0,  \xi_t(y_k)=1\right)
\end{align*}
Interchanging the role of $x$ and $y_k$, the above
\begin{equation*}
= \sum_{x, y_k\sim x} \frac{p_k}{d_k(x)}\sum_{x*(k-1)\neq y_k}P\left( \xi_t(x)=1, \ \xi_t(y_k)=0\right)
\end{equation*}
Then \eqref{step1} can be reformulated as 
\begin{align*}
&=-\sum_x \frac{p_k}{d_k(x)}\sum_{x*k}P\left( \xi_t(x)=1, \xi_t(y_i)=0 \hbox{ for all $1\le i \le k$}\right)\\
&+\sum_{x,y_k\sim x} \frac{p_k}{d_k(x)}\sum_{x*(k-1)\neq y_k}P\left( \xi_t(x)=1, \ \xi_t(y_k)=0\right)\\
&+\sum_{x,y_k\sim x} \frac{p_k}{d_k(x)}\sum_{x*(k-1)\neq y_k}P(\xi_t(x)=\xi_t(y_k)=0,  \xi_t(y_i)=1\ \hbox{for some }\ i<k)
\end{align*}
Combining the first two summations and summing over $k>1$ gives the desired result.
\end{proof}

\subsection{Step 2: Frontier lower bounds}

Pick a vertex from the tree to be the root and call it $x_0$. Given a vertex $x$ in the tree we say that
$x'$ is a child of $x$ if it is a neighbor of $x$ and further away from the root than $x$ is. 
We define the {\bf subtree generated by} $x'$, $S(x')$, to be all of the vertices that can be reached from $x'$ without going through $x$. By definition, $x' \in S(x')$. For any finite set on the tree $A$, define its {\bf frontier} $F(A)$ as the set of sites $x\in A$ that have a child $x'$ such that the subtree $S(x')\cap A=\emptyset$ and define the {\bf exterior boundary of A}, $H(A)$ to be the set of all such children $x'$. That is, $x'\in H(A)$ if and only if $S(x')\cap A=\emptyset$ and the parent of $x'$ is in $F(A)$. to help visualize the definitions, see Figure \ref{fig:FAdef}. Our next step is to lower bound the sizes of the sets we just defined. 

\begin{lemma}\label{lem1}
$|H(A)|\geq |A|$ and $|F(A)|\geq |A|/(M-1)$.
\end{lemma}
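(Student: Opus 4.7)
My plan is to control $|H(A)|$ and $|F(A)|$ by a local counting argument on children.  Choose the root $x_0\notin A$, which is possible since $A$ is finite; then every $x\in A$ has exactly $d(x)-1$ children.  Partitioning these children by whether the generated subtree meets $A$, let $m_x$ count those with $S(x')\cap A\neq\emptyset$ and $n_x$ those with $S(x')\cap A=\emptyset$, so $m_x+n_x=d(x)-1\ge d_{\min}-1\ge 2$ and $|H(A)|=\sum_{x\in A}n_x$.  In particular, $|H(A)|\ge|A|$ reduces to showing $\sum_{x\in A}m_x\le|A|$.

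The main step is to prove this latter inequality, and the cleanest way is to construct an injection from the set $\mathcal{P}$ of pairs $(x,x')$ counted by $\sum m_x$ into $A$ itself.  To a pair $(x,x')\in\mathcal{P}$, assign $f(x')\in A\cap S(x')$ minimizing the graph distance to $x'$ (breaking ties arbitrarily).  The output is in $A$ by construction, so the content is injectivity of $f$.

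Suppose $f(x_1')=f(x_2')=y$ with $x_1'\ne x_2'$.  Since $y\in S(x_1')\cap S(x_2')$, both $x_1',x_2'$ are ancestors of $y$ on the tree, hence lie on the path from $y$ to $x_0$ and are comparable; without loss of generality $x_1'$ is a strict descendant of $x_2'$.  Then $p(x_1')$ lies in $A$ (as $(x,x_1')\in\mathcal{P}$ forces $p(x_1')=x\in A$), lies in $S(x_2')$, and sits strictly closer to $x_2'$ than $y$ does along the $x_2'$--$y$ path, contradicting the minimality defining $f(x_2')=y$.  Hence $\sum_{x\in A}m_x\le|A|$ and
\[
|H(A)|=\sum_{x\in A}(m_x+n_x)-\sum_{x\in A}m_x\ge 2|A|-|A|=|A|.
\]

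For the second bound, every $x\in F(A)\subset A$ contributes at most $d(x)-1\le M-1$ vertices to $H(A)$, so $|H(A)|\le(M-1)|F(A)|$ and $|F(A)|\ge|A|/(M-1)$.  I expect the only non-routine ingredient to be the injectivity of $f$; once $x_0\notin A$ has been arranged, the degree bookkeeping and the passage from $|H(A)|$ to $|F(A)|$ are routine.
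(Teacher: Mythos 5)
Your argument is correct in substance and takes a genuinely different route from the paper. The paper's proof is by induction on $|A|$: remove the vertex of $A$ farthest from the root, apply the inductive hypothesis to the remainder $B$, and observe that reinstating $x$ adds $d(x)-1$ new boundary children while possibly removing $x$ itself from $H(B)$, giving $|H(A)|\ge |H(B)|-1+(d(x)-1)$. Your proof instead is a direct double-count: split the children of each $x\in A$ into those whose subtree meets $A$ ($m_x$ of them) and those whose subtree misses $A$ ($n_x$ of them), observe $|H(A)|=\sum n_x$, and then show $\sum m_x\le |A|$ via an explicit injection that sends each counted child $x'$ to the nearest member of $A\cap S(x')$. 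The injectivity check is exactly the right thing: if $f(x_1')=f(x_2')=y$ with $x_2'$ the strict ancestor, then $p(x_1')\in A\cap S(x_2')$ is strictly closer to $x_2'$ than $y$, contradiction. Your approach is non-inductive and arguably more transparent about \emph{why} the bound holds; the paper's induction is shorter to write once you guess that the extremal vertex is the right one to peel off. Both yield the same bound, and your passage to $|F(A)|\ge |A|/(M-1)$ is the same as the paper's.

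One small caveat: the root $x_0$ is fixed by the setup before $A$ is introduced (indeed, in the later application with $\bar A_t$ the paper explicitly considers the possibility $x_0\in\bar A_t$), so you are not free to re-root so that $x_0\notin A$, since $F(A)$ and $H(A)$ depend on that choice. This is easily repaired: if $x_0\in A$, then $x_0$ has $d(x_0)\ge 3$ children rather than $d(x_0)-1\ge 2$, so the inequality $\sum_{x\in A}(m_x+n_x)\ge 2|A|$ still holds (in fact strictly), and the injection argument is unaffected. You should simply drop the re-rooting and handle the root as a (favorable) special case in the degree count.
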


\begin{proof}
We prove the first result by induction on the cardinality of $|A|$. If $|A|=1$, the result is trivial as $|H(A)|\geq d(x)-1\geq 2$. Suppose now that the result is true for all $B$ with $|B|\leq n-1$ and let $|A|=n$. Let $x\in A$ be the point with the largest distance to the root and let $B= A\setminus \{x\}$. Then by induction $|H(B)|\geq n-1$. Since none of the descendents of $x$ are in $A$, but $x$ might be in $H(B)$.
$$
|H(A)| \geq |H(B)| - 1 + d(x) - 1 \geq (n-1)-1 + 2=n
$$
The second result follows from the first since $|H(A)|\leq (M-1)|F(A)|$.
\end{proof}

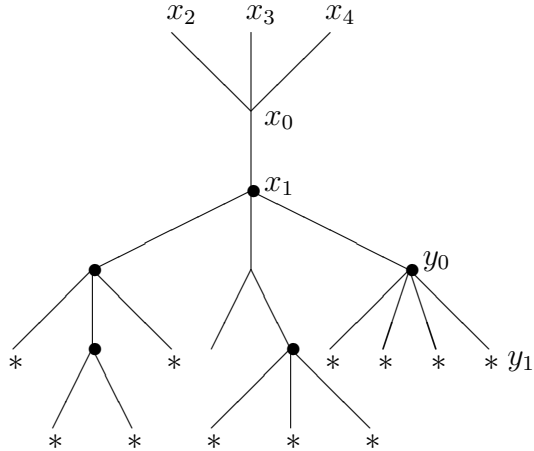
\begin{figure}[ht]

\begin{center}
\begin{picture}(240,210)
\put(120,150){\line(1,1){30}}
\put(120,150){\line(-1,1){30}}
\put(120,150){\line(0,1){30}}
\put(120,150){\line(0,-1){30}}
\put(120,120){\line(0,-1){30}}
\put(120,120){\line(-2,-1){60}}
\put(120,120){\line(2,-1){60}}
\put(120,90){\line(-1,-2){15}}
\put(120,90){\line(1,-2){15}}
\put(180,90){\line(-1,-3){10}}
\put(180,90){\line(1,-3){10}}
\put(180,90){\line(-1,-1){30}}
\put(180,90){\line(1,-1){30}}
\put(60,90){\line(1,-1){30}}
\put(60,90){\line(-1,-1){30}}
\put(60,90){\line(0,-1){30}}
\put(60,60){\line(-1,-2){15}}
\put(60,60){\line(1,-2){15}}
\put(135,60){\line(1,-1){30}}
\put(135,60){\line(-1,-1){30}}
\put(135,60){\line(0,-1){30}}
\put(125,145){$x_0$}
\put(125,120){$x_1$}
\put(88,185){$x_2$}
\put(118,185){$x_3$}
\put(148,185){$x_4$}
\put(118,117){$\bullet$}
\put(58,87){$\bullet$}
\put(178,87){$\bullet$}
\put(185,92){$y_0$}
\put(58,57){$\bullet$}
\put(133,57){$\bullet$}
\put(28,52){$\ast$}
\put(88,52){$\ast$}
\put(148,52){$\ast$}
\put(168,52){$\ast$}
\put(188,52){$\ast$}
\put(208,52){$\ast$}
\put(217,54){$y_1$}
\put(43,22){$\ast$}
\put(73,22){$\ast$}
\put(103,22){$\ast$}
\put(133,22){$\ast$}
\put(163,22){$\ast$}
\end{picture}
\caption{For simplicity we have only drawn the edges from vertices within distance 3 of the root
that are relevant to the definitions. $\bullet$ indicates sites in $A$. 
All the $\bullet$s are in $F(A)$r except for $x_1$.
$\ast$s mark the points in $H(A)$.}
\end{center}
\label{fig:FAdef}
\end{figure}

\subsection{ODE lower bounds}

\noindent Let $A_t = \{ x : \xi_t(x)=1\}$ Our next step is

\begin{lemma} \label{Atbd}
Let $\gamma = -p_0 + \sum_k p_k (k-1)$ (which is $>0$ by assumption).
$$
\frac{d}{dt} E|A_t| \ge \gamma E|A_t|
$$
\end{lemma}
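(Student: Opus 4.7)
The plan is to start from the ODE derived in Step 1, discard the nonnegative ``second'' term, and bound the ``third'' (birth) term $T_3$ from below using the frontier estimate of Lemma \ref{lem1}. Since \eqref{ode} immediately yields
$$
\frac{d}{dt}\, E|A_t| \ \ge\ -p_0\,E|A_t| + T_3,
$$
it suffices to prove $T_3 \ge \bigl(\sum_{k\ge 2}(k-1)p_k\bigr)\,E|A_t|$.

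The key observation is that whenever $x \in H(A_t)$, by definition the parent of $x$ lies in $F(A_t)\subseteq A_t$ and is therefore in state $1$, while $x$ itself and all of its children, which together exhaust $x$'s remaining neighbors, are in state $0$. Hence, for any ordered $k$-tuple $(y_1,\ldots,y_k)$ of distinct neighbors of $x$ in which the parent of $x$ occupies some position $i<k$ and $y_k$ is a child of $x$, the event $\{\xi_t(x)=\xi_t(y_k)=0,\ \xi_t(y_i)=1\}$ is automatically implied by $\{x\in H(A_t)\}$. For fixed $x$ and fixed $k\ge 2$, a direct count shows there are exactly $(k-1)\,d_k(x)$ such tuples: choose the position $i\in\{1,\ldots,k-1\}$ for the parent in $k-1$ ways, then fill the remaining $k-1$ slots with an ordered selection of distinct children of $x$, giving $(d(x)-1)(d(x)-2)\cdots(d(x)-k+1)=d_k(x)$ possibilities. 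Restricting the sum that defines $T_3$ to this subclass of tuples therefore gives
\begin{align*}
T_3 \ &\ge\ \sum_{k\ge 2}\sum_x \frac{p_k}{d_k(x)} \cdot (k-1)\,d_k(x)\cdot P(x\in H(A_t))\\
&=\ \Bigl(\sum_{k\ge 2}(k-1)p_k\Bigr)\,E|H(A_t)|.
\end{align*}

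Combining this with the bound $E|H(A_t)|\ge E|A_t|$ from Lemma \ref{lem1} and the $-p_0 E|A_t|$ term in \eqref{ode} produces the asserted differential inequality with coefficient $\gamma=-p_0+\sum_{k\ge 2}(k-1)p_k$. The main bookkeeping challenge is this tuple count; the factor $k-1$ arising from the choice of which of the first $k-1$ slots is occupied by the parent is exactly what is needed to cancel the $d_k(x)$ normalization appearing in \eqref{ode} and recover the branching intuition that each exterior-boundary site contributes on average $(k-1)p_k$ new zealots per type-$k$ consultation. Note that this step only uses the cruder half of Lemma \ref{lem1}; the sharper bound $|F(A)|\ge |A|/(M-1)$ will be needed later, when the ODE is combined with a comparison to a supercritical branching walk on the frontier.
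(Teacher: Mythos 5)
Your proof is correct and takes essentially the same approach as the paper's: discard the nonnegative second term of \eqref{ode}, restrict the third (birth) term to ordered tuples where $x\in H(A_t)$, $y_k$ is a child of $x$, and the parent of $x$ occupies one of the first $k-1$ slots, count those tuples, and invoke $|H(A_t)|\ge|A_t|$ from Lemma \ref{lem1}. Your tuple count $(k-1)\,d_k(x)$ is in fact stated more cleanly than the paper's (which writes the count as $(d(x)-1)\cdot(k-1)\cdot\binom{d(x)-2}{k-1}$, evidently a typo for the falling factorial $(d(x)-1)(d(x)-2)\cdots(d(x)-k+1)$), but both simplify to the same $(k-1)p_k\,E|H(A_t)|$.
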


\begin{proof}
Let $l_x$ be the distance of $x$ from the root. 
The expression on the second line in \eqref{ode} is $\geq 0$. The third line is
\begin{align*}
&= \sum_{x,y_k\sim x} \frac{p_k}{d_k(x)}\sum_{x*(k-1)\neq y_k} P(\xi_t(x)=\xi_t(y_k)=0, \hbox{ some } \xi_t(y_i)=1)
\\
&\geq E\sum_{x\in H(A_t),l_{y_k}>l_x} \frac{p_k}{d_k(x)}\sum_{y_i\in F(A_t)\ for\  some\ 1\leq i\leq k-1} 1
\\
&= E\sum_{x\in H(A_t),l_{y_k}>l_x} \frac{p_k}{d_k(x)}\cdot (d(x)-1) \cdot (k-1) \cdot \binom{d(x)-2}{k-1}
\\
&=(k-1)p_k E|H(A_t)| \ge (k-1)p_k |A_t|
\end{align*}
In the third line, $d(x)-1$ gives the choices for $y_k$. $k-1$ is because we have $k-1$ choices from $y_1,...,y_{k-1}$ to be on the frontier. Suppose $y_1$ is chosen to be in the frontier, then the number of choices for $y_2, \ldots y_{k-1}$ is ${d(x)-2 \choose k-1}$. The final inequality comes from Lemma \ref{lem1}
\end{proof}

Choose a neighbor $x_1$ of the root $x_0$. (See Figure \ref{fig:FAdef} for a picture.)  Set all the sites outside of $S(x_1)\cup\{x_0\}$ to be always equal to 0. Let $\bar{\xi}_t$ be the process restricted to $S_1 \equiv S(x_1)\cup \{x_0\}$. Let 
\begin{align*}
\bar{A}_t = \{ x : \bar\xi_t(x)=1 \} & \qquad A^*_t = \bar{A}_t \cap S(x_1) \\
H^*(\bar{A}_t) = H(\bar{A}_t) \cap S(x_1)& \qquad  F^*(\bar{A}_t) = F(\bar{A}_t) \cap S(x_1)
\end{align*} 

\begin{lemma} \label{barAtbd}
$$
\frac{d}{dt} E|\bar A_t| \ge \gamma E|\bar A_t| - (\gamma+1)(M-1)
$$
\end{lemma}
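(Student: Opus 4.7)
My plan is to mirror the proof of Lemma \ref{Atbd} for the restricted process $\bar\xi_t$, carefully tracking where the boundary of $S_1$ at $x_0$ introduces bounded corrections. The key geometric point is that every vertex in $S_1\setminus\{x_0\}$ has all its neighbors inside $S_1$; only $x_0$ has neighbors outside $S_1$, namely $x_2,\ldots,x_{d(x_0)}$, and these are frozen at $0$. So the dynamics of $\bar\xi_t$ differ from those of $\xi_t$ restricted to $S_1$ only through the single ``boundary vertex'' $x_0$, which has at most $M-1$ absent neighbors.

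First, I would redo the derivation of \eqref{ode} with $\xi_t$ replaced by $\bar\xi_t$ and all outer sums restricted to $x\in S_1$. The only delicate step is the interchange $x\leftrightarrow y_k$ used to transform the pure-gain term $\sum_x(p_k/d_k(x))\sum_{x*k}P(\bar\xi_t(x)=0,\bar\xi_t(y_k)=1)$. For edges with both endpoints in $S_1$ the swap goes through unchanged; on the boundary edges $(x_0,x_i)$ with $i\ge 2$ the original probability vanishes while the interchanged expression contains $P(\bar\xi_t(x_0)=1,\bar\xi_t(x_i)=0)=P(\bar\xi_t(x_0)=1)$, so the swap overcounts by
$$
\sum_{k\ge 1}p_k\,(d(x_0)-1)\,P(\bar\xi_t(x_0)=1)\ \le\ (M-1)(1-p_0).
$$
Hence the identity \eqref{ode}, with $\xi$ replaced by $\bar\xi$ and all outer sums over $S_1$, holds up to an additive error of magnitude at most $(M-1)(1-p_0)$, and the restricted versions of its second and third lines are still pointwise nonnegative.

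Next, I would bound the restricted version of line three of \eqref{ode} exactly as in Lemma \ref{Atbd}, but using $H^*(\bar A_t)$ and $F^*(\bar A_t)$ in place of $H$ and $F$. Since $\bar A_t\subset S_1$ and the only children of vertices of $F(\bar A_t)$ that can leave $S_1$ are the children $x_2,\ldots,x_{d(x_0)}$ of $x_0$, we get
$$
H(\bar A_t)\setminus H^*(\bar A_t)\subset\{x_2,\ldots,x_{d(x_0)}\},
$$
so $|H^*(\bar A_t)|\ge |H(\bar A_t)|-(M-1)\ge |\bar A_t|-(M-1)$ by Lemma \ref{lem1}. The parent of any $x\in H^*(\bar A_t)$ still lies in $S_1$ (since $x\ne x_0$), so it belongs to $F^*(\bar A_t)$, and the combinatorial count from Lemma \ref{Atbd} goes through unchanged. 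This yields the lower bound $(k-1)p_k(E|\bar A_t|-(M-1))$ for each $k\ge 2$, contributing a further deficit of $(M-1)\sum_{k\ge 2}(k-1)p_k$.

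Combining the death term $-p_0E|\bar A_t|$, the gain term $\sum_{k\ge 2}(k-1)p_kE|\bar A_t|$, and the two boundary deficits yields
$$
\frac{d}{dt}E|\bar A_t|\ \ge\ \gamma E|\bar A_t|-(M-1)\Bigl[(1-p_0)+\sum_{k\ge 2}(k-1)p_k\Bigr]=\gamma E|\bar A_t|-(\gamma+1)(M-1),
$$
as claimed. The main bookkeeping obstacle is confirming that the only boundary error produced by the exchange step is the clean form $(d(x_0)-1)P(\bar\xi_t(x_0)=1)\sum_{k\ge 1}p_k$; once that is settled, the argument reduces mechanically to Lemma \ref{Atbd} plus the frontier correction provided by Lemma \ref{lem1}.
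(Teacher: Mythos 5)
Your proof is correct and follows the same strategy as the paper's: restrict the ODE argument to $S_1$, track the corrections coming from the single boundary vertex $x_0$, and combine with the frontier bound $|H^*(\bar A_t)|\ge |\bar A_t|-(M-1)$. In fact your account is a bit more complete than the paper's, which explicitly attributes the boundary correction only to the non-cancelling $k=1$ term $-p_1(d(x_0)-1)P(\bar\xi_t(x_0)=1)$ and seems to pass over the analogous $k\ge 2$ terms produced by the $x\leftrightarrow y_k$ interchange; you track those too, and with them the sum of the interchange deficit $(M-1)(1-p_0)$ and the frontier deficit $(M-1)\sum_{k\ge2}(k-1)p_k$ gives exactly $(\gamma+1)(M-1)$ rather than merely being bounded by it. One small inaccuracy: you assert that the parent of any $x\in H^*(\bar A_t)$ lies in $F^*(\bar A_t)$, which fails when $x=x_1$ (its parent is $x_0\notin S(x_1)$); however the combinatorial count only requires that the parent lie in $\bar A_t$, which it does, so the argument is unaffected.
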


\begin{proof} We repeat the proof of Lemma \ref{Atbd}. The differential equation in \eqref{step0} remains valid but 
when we make the transition to \eqref{step1} there is a term with $k=1$ that does not cancel:
$$
- p_1[d(x_0)-1]P(\bar{\xi}_t(x_0)=1)
$$
Note that if $x_0\in \bar{A}_t$, then $x_2,\dots, x_{d(x_0)}\in H(\bar{A}_t)$ so
$$
|H^*(\bar{A}_t)|\geq |H(\bar{A}_t)|-[d(x_0)-1] \geq |\bar A_t| - (d(x_0)-1)
$$
where the last inequality follows from  Lemma \ref{lem1}. Using $d(x_0) \le M$ the desired result follows. 
\end{proof}

Let $L=2(\gamma+1)(M-1)/\gamma$. Lemma \ref{barAtbd} implies that once $E|\bar A_t| \ge L$ it grows exponentially
with rate $\ge \alpha = (\gamma+1)(M-1)$. 

\begin{lemma}\label{lem2} 
There exists $\ep_0>0$ such that 
\begin{equation}
P( |\bar A_1| \ge L ) \ge \ep_0
\end{equation}
for all trees $\TT$ with $3 \le d_{min} \le d(x) \le M$
\end{lemma}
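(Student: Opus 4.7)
The plan is to exhibit, on top of the graphical representation restricted to $S_1$, a finite list of Poisson events whose joint occurrence forces $|\bar A_1|\ge L$ and whose probability can be bounded below uniformly over all admissible trees. I take the initial condition to be $\bar\xi_0=\mathbf{1}_{\{x_0\}}$. Since $\gamma=-p_0+\sum_{k\ge 2}(k-1)p_k>0$ there is some $k^*\ge 2$ with $p_{k^*}>0$. Because $d_{\min}\ge 3$ the set $S_1$ is infinite, so I can pick a subtree $T_0\subset S_1$ of size exactly $L$ rooted at $x_0$. Enumerate its vertices $v_1=x_0,v_2,\dots,v_L$ in BFS order, letting $p(v_i)\in\{v_1,\dots,v_{i-1}\}$ denote the parent of $v_i$ in $T_0$, and partition $[0,1]$ into $L-1$ consecutive intervals $I_2,\dots,I_L$ of equal length $\delta=1/(L-1)$.

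Define the event $E$ to require: (a) no Poisson arrival of any type occurs at $v_1$ during $[0,1]$; and (b) for each $i=2,\dots,L$, exactly one Poisson arrival (across all types $T^{v_i,k}$, $k\ge 0$) occurs at $v_i$ during $[0,1]$, this arrival is of type $T^{v_i,k^*}$, falls inside $I_i$, and selects $p(v_i)$ among its $k^*$ random neighbors. An induction on $i$ then yields, on $E$: $v_1$ is in state $1$ throughout $[0,1]$; and each $v_i$ with $i\ge 2$ switches to $1$ at its unique arrival time in $I_i$ (its parent is already a $1$ by then, having been in state $1$ since its own arrival in $I_j$ for some $j<i$) and then remains a $1$ until time $1$ because no further events touch $v_i$. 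Consequently $\{v_1,\dots,v_L\}\subset\bar A_1$ on $E$.

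It remains to bound $P(E)$ from below. At any vertex $v$ the total Poisson rate is $r_v=p_0+d(v)(1-p_0)$, which always lies in $[1,M]$, so (a) has probability $e^{-r_{v_1}}\ge e^{-M}$. The subevent at each $v_i$ in (b) factors as (exactly one arrival in $[0,1]$)$\times$(of type $k^*$)$\times$(arrival in $I_i$)$\times$($p(v_i)$ among the selected neighbors) and has probability
\begin{equation*}
r_{v_i}e^{-r_{v_i}}\cdot\frac{d(v_i)p_{k^*}}{r_{v_i}}\cdot\delta\cdot\frac{k^*}{d(v_i)}=e^{-r_{v_i}}p_{k^*}k^*\delta\ge e^{-M}p_{k^*}k^*\delta.
\end{equation*}
Because the subevents concern disjoint Poisson clocks at distinct vertices they are independent, giving
\begin{equation*}
P(E)\ge e^{-M}\bigl(e^{-M}p_{k^*}k^*\delta\bigr)^{L-1}=:\epsilon_0>0,
\end{equation*}
a constant depending only on $p_0$, the $p_k$'s, and $M$.

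The main care in the argument is in formulating event (b) so that $v_i$ really does stay in state $1$ once it becomes a $1$ (addressed by demanding exactly one arrival per $v_i$) and in verifying that the clocks used at distinct vertices and across different $k$'s are independent; beyond that the estimate is mechanical, so I do not expect a serious obstacle.
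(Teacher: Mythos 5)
Your proof is correct and follows essentially the same strategy as the paper, which considers the event that at time $1$ there is an occupied path from the root to distance $L-1$ and notes its probability is bounded below uniformly over admissible trees. You construct an explicit such event (a subtree of $L$ vertices becoming occupied in BFS order via a cascade of $k^*$-type arrivals) and carry out the probability estimate carefully, which fills in the details the paper declares ``easy to see.''
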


\begin{proof} Let $G_L$ be the event that at time 1 there is an occupied path from the root $x_0$ to distance $L-1$.
It is easy to see that there exists $\ep_0>0$ such that $P( G_L ) \ge \ep_0$
for all trees $\TT$ with $3 \le d_{min} \le d(x) \le M$. To see this note that the worst case occurs when all sties have degree 3 but
offspring are sent across an edge with probability $1/M$. 
\end{proof}

\begin{lemma}\label{lem3} 
There exists $t_0>0$ such that 
\begin{equation}
E|F^*(\bar A_{t_0})| \ge 2
\end{equation}
for all trees with $3 \le d_{min} \le d(x) \le M$
\end{lemma}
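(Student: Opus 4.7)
The plan is to propagate the positive-probability ``good start'' from Lemma~\ref{lem2} through the linear ODE bound of Lemma~\ref{barAtbd}, and then convert mass of $\bar A_t$ into frontier mass via Lemma~\ref{lem1}.

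Set $C=(\gamma+1)(M-1)/\gamma$, so that $L=2C$ and Lemma~\ref{barAtbd} reads $u'(t)\ge \gamma\, u(t)$ with $u(t):=E|\bar A_t|-C$. The proof of Lemma~\ref{barAtbd} only invokes the graphical dynamics and Lemma~\ref{lem1}, so it is insensitive to the initial law of $\bar\xi_0$. I would therefore apply the Markov property at time $1$: for $t\ge 1$ the conditional mean $v(t):=E[|\bar A_t|\mid \bar\xi_1]$ satisfies the same inequality $(v-C)'\ge\gamma(v-C)$, and Gr\"onwall gives
\[
v(t)-C\ \ge\ (v(1)-C)\,e^{\gamma(t-1)}.
\]
On the event $G=\{|\bar A_1|\ge L\}$ one has $v(1)-C\ge C$, hence $v(t)\ge C(1+e^{\gamma(t-1)})$. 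Taking unconditional expectation and invoking Lemma~\ref{lem2},
\[
E|\bar A_t|\ \ge\ E[v(t)\mathbf{1}_G]\ \ge\ C\bigl(1+e^{\gamma(t-1)}\bigr) P(G)\ \ge\ \ep_0 C\, e^{\gamma(t-1)},\qquad t\ge 1.
\]

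Next I would convert this into a frontier bound. Since $\bar A_t\subseteq S(x_1)\cup\{x_0\}$, any $x\in F(\bar A_t)\setminus\{x_0\}$ already lies in $S(x_1)$, so $F^*(\bar A_t)\supseteq F(\bar A_t)\setminus\{x_0\}$. Combined with the second assertion of Lemma~\ref{lem1},
\[
|F^*(\bar A_t)|\ \ge\ |F(\bar A_t)|-1\ \ge\ \frac{|\bar A_t|}{M-1}-1.
\]
Taking expectation,
\[
E|F^*(\bar A_t)|\ \ge\ \frac{\ep_0 C}{M-1}\,e^{\gamma(t-1)}-1,
\]
which exceeds $2$ as soon as $t_0\ge 1+\gamma^{-1}\log\!\bigl(3(M-1)/(\ep_0 C)\bigr)$. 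The constants $\ep_0,C,\gamma,M$ depend only on $d_{\min}$, $M$, and the rates $(p_k)$, so this $t_0$ works uniformly over all admissible trees.

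The one non-routine point I anticipate is checking that Lemma~\ref{barAtbd} really extends to arbitrary (random) initial configurations, so that the ODE inequality survives conditioning on $\bar\xi_1$ and can be restarted at time $1$. Once this essentially local observation is in hand, the rest is Gr\"onwall together with the deterministic frontier bound of Lemma~\ref{lem1}.
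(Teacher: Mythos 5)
Your argument is correct and follows the same route as the paper's proof: restart from time $1$ on the event of Lemma~\ref{lem2}, grow $E|\bar A_t|$ exponentially via Lemma~\ref{barAtbd}, and convert to frontier mass via $|F^*(\bar A_t)|\ge |\bar A_t|/(M-1)-1$. Your version is actually a bit cleaner — subtracting the constant $C$ yields the exact rate $\gamma$ for $E|\bar A_t|-C$, whereas the paper's stated rate ``$\alpha=(\gamma+1)(M-1)$'' appears to be a typo — and you correctly flag (and can dispose of) the Markov-property/restart point, since the ODE derivation in Lemmas~\ref{Atbd}--\ref{barAtbd} is indifferent to the initial law.
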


\begin{proof}
Conditioning on $\{ |\bar A_t| \ge L \}$ it follows that for all trees with $3 \le d_{min} \le d(x) \le M$.
$$ 
E|\bar{A}_t|\geq \ep_0 e^{\alpha(t-1)]}
$$
Now $|F^*(\bar A_t)| \ge |F(\bar A_t)| -1$ with equality if $x_0$ is in state 1, so by Lemma \ref{lem1}
$$
|F^*(\bar{A}_t)|\geq |F(\bar{A}_t)|-1 \geq \frac{1}{M-1}|\bar{A}_t|-1
$$
and the desired result follows.
\end{proof}

\subsection{Step 4: Lower bounding BRW} 

Now define a lower bounding branching random walk $Z_n$. Let $Z_0=\{ x_0\}$, where $x_0$ is the root. Let $Z_1=F^*(\bar{A}_{t_0})=F(\bar{A}_{t_0})\cap S(x_1)$. Inductively, given $Z_n$, note that for any $x\in Z_n$, $x$ has a child $x'$ such that $S(x')\cap\bar{A}_{nt_0}=\emptyset$.   Let $F^*(\bar{A}^{x,0}_{t_0})$ be the children of $x$, where the superscript $0$ means that to obtain $\bar{A}^{x,0}_t$, we enforce 0-boundary condition on sites above $x$. Hence all the neighbors of $x$ except for $x'$ are in state 0 during $[nt_0, \ (n+1)t_0]$. Therefore all $\bar{A}^{x,0}_t \ \forall x\in Z_n$ are independent and $E|F^*(\bar{A}^{x,0}_{t_0})|>1 \ \forall x\in Z_n$ by Lemma \ref{lem2}. Define the $n+1$ th generation by
\begin{equation*}
Z_{n+1}=\bigcup_{x\in Z_n}F^*(\bar{A}^{x,0}_{t_0})
\end{equation*}

\begin{lemma}\label{lem3}
There exists $C_v>0$ such that for all trees $\TT$ with $3 \le d(x) \le M$ for all $x$
\begin{align}
&E[Z_{n+1}|Z_n]\geq 2Z_n\\
&Var(Z_{n+1}|Z_n)\leq C_vZ_n\label{neq2}
\end{align}
\end{lemma}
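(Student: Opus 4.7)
The plan is to exploit the independence of the single-source processes $\bar{A}^{x,0}_{t_0}$, $x \in Z_n$, stated in the paragraph preceding the lemma. Since the chosen empty subtrees $S(x')$ are disjoint across $x \in Z_n$ (any other $x'' \in Z_n$ lies in $\bar A_{nt_0}$, while $S(x')\cap\bar A_{nt_0}=\emptyset$ by the definition of the frontier) and the graphical representation is independent on disjoint regions, this independence reduces both estimates to bounds on each $|F^*(\bar{A}^{x,0}_{t_0})|$ that are uniform over $x$ and over the tree.

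For the mean, conditioning on $Z_n$ and using linearity together with the independence,
\begin{equation*}
E[|Z_{n+1}| \mid Z_n] = \sum_{x \in Z_n} E|F^*(\bar{A}^{x,0}_{t_0})|.
\end{equation*}
For each $x \in Z_n$, the process $\bar{A}^{x,0}_{t}$, started from $\{x\}$ with 0-boundary on all neighbors of $x$ other than $x'$, is exactly the setup of the preceding lemma applied to the subtree $S(x')\cup\{x\}$ rooted at $x$ (with $x$ playing the role of $x_0$ and $x'$ the role of $x_1$). Since that lemma used only the degree bounds $3 \le d_{\min} \le d(y) \le M$, which are inherited by the subtree, it yields $E|F^*(\bar{A}^{x,0}_{t_0})| \ge 2$, and summing gives $E[|Z_{n+1}| \mid Z_n] \ge 2|Z_n|$.

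For the variance, by independence,
\begin{equation*}
\mathrm{Var}(|Z_{n+1}| \mid Z_n) = \sum_{x \in Z_n} \mathrm{Var}\bigl(|F^*(\bar{A}^{x,0}_{t_0})|\bigr) \le \sum_{x \in Z_n} E\bigl|\bar{A}^{x,0}_{t_0}\bigr|^2,
\end{equation*}
since $|F^*(A)|\le |A|$. To bound $E|\bar{A}^{x,0}_{t_0}|^2$ uniformly, the plan is to dominate $|\bar A^{x,0}_t|$ stochastically by a Yule-type pure birth process $N_t$ starting from $1$ with per-particle rate $\Lambda$ depending only on $M$: each occupied site has at most $M$ neighbors, and any currently unoccupied neighbor $y$ becomes $1$ at rate at most $\sum_k d(y)p_k\le M$, so the total rate of new births in $\bar A^{x,0}_t$ is at most $M^{2}|\bar A^{x,0}_t|$. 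Ignoring the deaths (at rate $p_0$) only inflates the process. Such a Yule process has a finite second moment $EN_{t_0}^{2}$ depending only on $M$ and $t_0$, which furnishes the desired constant $C_v$.

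The main obstacle is the variance step: one needs a coupling with a branching process whose second moment at time $t_0$ is independent of the tree. The rate bound above, valid because every vertex has degree at most $M$, is precisely what makes the domination uniform; without this ingredient, the linear-in-$Z_n$ control of the conditional variance would fail.
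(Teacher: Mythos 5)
Your proposal is correct and follows essentially the same approach as the paper: use independence of the graphical representation on the disjoint subtrees $S(x')$, $x \in Z_n$, so that conditional expectation and variance factor as sums over $x\in Z_n$; invoke the preceding lemma $E|F^*(\bar A_{t_0})|\ge 2$ (applied with $x$ as root and $x'$ as the designated child) for the mean; and stochastically dominate $|\bar A^{x,0}_t|$ by a Yule process whose per-particle birth rate depends only on $M$ to get the second-moment bound. Your rate accounting ($\le M^2$ per occupied site, counting at most $M$ boundary neighbors each flipping at rate at most $M$) is actually slightly more careful than the paper's stated rate-$M$ comparison, but both yield a constant depending only on $M$ and $t_0$, which is all that is needed.
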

\begin{proof}
Given any tree $\TT$, note that  $Z_{n+1}=\bigcup_{x\in Z_n}F^*(\bar{A}^x_{t_0})$ and $F^*(\bar{A}^x_{t_0})\cap F^*(\bar{A}^y_{t_0})=\emptyset$ if $x\neq y$. Then
\begin{equation}
E[Z_{n+1}|Z_n, \TT]=\sum_{x\in Z_n} E\left[ |F^*(\bar{A}^{x}_{t_0})| |Z_n, \ T\right]>(1+\epsilon)Z_n
\end{equation}
To prove (\ref{neq2}), let $\eta_t$ be a branching process where $\eta_0=1$ and every particle gives a birth at rate $M$ without death. Then given any tree, $|\bar{A}_t| $ is stochastically bounded by $|\eta_t|$.
So now, let $\TT^x$ denote the subtree consisting of $x$ and its descendents. By independence
\begin{align*}
&\var\left( Z_{n+1}|Z_n, \TT\right) =\sum_{x\in Z_n} \var\left(|\bar{A}^{x,0}_{t_0}| | \TT^x \right)\\
&\leq \sum_{x\in Z_n}E\left[| \bar{A}^{x,0}_{t_0}|^2\ |  \TT^x\right] \leq \sum_{x\in Z_n}E|\eta_{t_0}|^2 =C_v Z_n
\end{align*}
Since $T$ is arbitrary, we have completed the proof.
\end{proof}

The next Lemma completes the proof of Theorem \ref{genthm}.
\begin{lemma}
With positive probability
\begin{equation}
\liminf_{n\to \infty}\frac{Z_n}{(3/2)^n}\ge 1.
\end{equation}
\end{lemma}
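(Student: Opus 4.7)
The plan is a standard two-phase Chebyshev--Borel-Cantelli argument that exploits the gap between the mean growth rate $2$ in the preceding lemma and the target rate $3/2$. Fix $a\in(3/2,2)$, say $a=7/4$. I will show that with positive probability there exist $N$ and $K$ such that $Z_n\ge K a^{n-N}$ for all $n\ge N$; since $a>3/2$, on this event $Z_n/(3/2)^n\to\infty$ and the $\liminf$ is $\ge 1$.

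From the preceding lemma and Chebyshev's inequality, for any $z\ge 1$,
$$
P(Z_{n+1}<aZ_n\mid Z_n=z)\le \frac{C_v z}{((2-a)z)^2}=\frac{C_v}{(2-a)^2 z}.
$$
This is the single-step control that drives the argument once $Z_n$ is large, but it is useless for $Z_n$ of order one. For the initial regime I would instead apply Paley--Zygmund to $Z_{n+1}$ given $Z_n=z\ge 1$: combining $E[Z_{n+1}\mid Z_n]\ge 2Z_n$ with $E[Z_{n+1}^2\mid Z_n]\le (E[Z_{n+1}\mid Z_n])^2+C_v Z_n$, a short calculation produces a uniform constant $p_0>0$ (depending only on $C_v$ and $a$) with $P(Z_{n+1}\ge aZ_n\mid Z_n\ge 1)\ge p_0$.

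Iterating this from $Z_0=1$ yields $P(Z_n\ge a^n\text{ for all }n\le N)\ge p_0^N>0$ for every finite $N$. Choose $K$ large enough that $\sum_{j\ge 0}C_v/((2-a)^2 a^j K)\le 1/2$ and then $N$ with $a^N\ge K$; the event $\{Z_N\ge K\}$ has positive probability. Conditional on $Z_N\ge K$, the Markov property, Chebyshev, and the product inequality $\prod(1-x_m)\ge 1-\sum x_m$ give
$$
P\bigl(Z_{N+m}\ge K a^m\ \forall\, m\ge 0\,\bigm|\,Z_N\ge K\bigr)\ge 1-\sum_{j\ge 0}\frac{C_v}{(2-a)^2 a^j K}\ge \tfrac{1}{2}.
$$
Combining these, the event $\{Z_n\ge K a^{n-N}\text{ for all }n\ge N\}$ has positive probability, which suffices.

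The main obstacle is the small-$Z_n$ regime, where Chebyshev gives no useful bound. Paley--Zygmund bridges this at the cost of a per-step probability $p_0$ that is only small but uniformly positive, so surviving and growing for a fixed number of steps has probability $p_0^N>0$; once past the threshold $K$, the variance bound takes over via Borel--Cantelli, and the choice of $a$ strictly between $3/2$ and $2$ converts exponential growth at rate $a$ into the desired liminf.
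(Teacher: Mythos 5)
Your proof is correct, and it takes a genuinely different route for the hard part of the argument---the initial phase where $Z_n$ is small. The paper handles that phase by invoking the occupied-path construction behind Lemma \ref{lem2} to assert directly that $P\bigl(Z_{n_0}\ge (3/2)^{n_0}\bigr)>0$ for some fixed $n_0$ with $\delta_{n_0}:=4C_v(2/3)^{n_0}<1$, and then runs a single Chebyshev product bound $\prod_{n\ge n_0}(1-\delta_n)>0$ at the rate $3/2$ itself. You instead stay entirely within the moment bounds of the preceding lemma: a Paley--Zygmund computation (using $E[Z_{n+1}\mid Z_n=z]\ge 2z$ and $\operatorname{Var}(Z_{n+1}\mid Z_n=z)\le C_v z$, so $(E[Z_{n+1}\mid Z_n])^2/E[Z_{n+1}^2\mid Z_n]\ge 1/(1+C_v/4)$ for $z\ge 1$) yields a uniform per-step success probability $p_0=(1-a/2)^2\cdot 4/(4+C_v)>0$, and iterating this gets you to $Z_N\ge K$ with probability at least $p_0^N$. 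After that the two proofs run on the same Chebyshev-plus-product engine; your introduction of the intermediate rate $a\in(3/2,2)$ is a cosmetic convenience that turns the conclusion into $Z_n/(3/2)^n\to\infty$ rather than a bare $\liminf\ge 1$. The trade-off is that your version is self-contained given only the mean and variance inequalities and so does not lean on the geometric occupied-path argument the paper cites, which is arguably cleaner, at the cost of the extra Paley--Zygmund step. Both are valid; the only hygiene point worth flagging in yours is that "the Markov property" should be understood as the conditional independence built into the block construction---given $Z_n$ and the tree, $Z_{n+1}$ is produced from Poisson marks on a fresh time interval with moment bounds uniform over $(Z_n,\mathcal{T})$---which is exactly what your conditioning uses.
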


\begin{proof}
First by Lemma \ref{lem3} and Chebyshev's Inequality,
\begin{align*}
&\ P\left(Z_{n+1}< (3/2)^{n+1} |Z_n\geq (3/2)^n\right)\\
&\leq  P\left( |Z_{n+1}-E[Z_{n+1}|Z_n]|> Z_n/2 |Z_n\geq (3/2)^n\right)\\
&\leq  E\left( \left. \frac{C_vZ_n}{(Z_n/2)^2} \right| Z_n\geq (3/2)^n\right) \leq 4C_v \cdot (2/3)^n \equiv \delta_n
\end{align*}
Pick $n_0$ large enough so that $\delta_{n_0}<1$. It follows from the proof of Lemma \ref{lem2} that $P( Z_{n_0}\geq (3/2)^{n_0}) > 0$.
Since $\delta_n$ is decreasing, we have
$$
P\left(\left. \liminf_{n\to \infty} \frac{Z_n}{(3/2)^n}\geq 1 \right| Z_{n_0}\geq (3/2)^{n_0}\right) \geq \prod^\infty_{n={n_0}}(1-\delta_n) > 0
$$
which proves the desired result.
\end{proof}

\clearp

\section{Results for $d$-regular trees}

\subsection{Extinction}

The first result is elementary but a proof is cincluded for completeness.

\begin{lemma} \label{nohit}
Let $h(x)$ be the probability two continuous time random walks separated by $x$ on a $d$-regular tree will hit. 
$$
h(x) = \left(\frac{1}{d-1}\right)^{x}
$$
\end{lemma}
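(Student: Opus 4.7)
The plan is to reduce the two-walker hitting problem to a one-dimensional absorbing random walk by tracking the pairwise distance $D_t = \mathrm{dist}(X_t, Y_t)$. Because the $d$-regular tree is vertex-transitive and all pairs of vertices at a given distance are equivalent under a tree automorphism, $D_t$ is itself a continuous-time Markov chain on $\{0,1,2,\ldots\}$ with state $0$ absorbing (absorption is exactly the "hitting" event, including the case when $D_t = 1$ and one walker jumps onto the other's site).

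Next I would compute the jump rates of $D_t$. From any state $x \ge 1$, the two walkers lie at the endpoints of a unique geodesic of length $x$; each walker has exactly one neighbor on that geodesic and $d-1$ neighbors off it. So $D_t$ decreases by $1$ at rate $2\cdot\frac{1}{d}$ and increases by $1$ at rate $2\cdot\frac{d-1}{d}$. Passing to the embedded discrete-time jump chain, at each jump the distance steps down with probability $p = 1/d$ and up with probability $q = (d-1)/d$.

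Then I would characterize $h(x)$ as the minimal nonnegative solution to the harmonic equation
\begin{equation*}
h(0) = 1, \qquad h(x) = \tfrac{1}{d}\, h(x-1) + \tfrac{d-1}{d}\, h(x+1) \quad \text{for } x \ge 1.
\end{equation*}
The associated characteristic equation $q r^2 - r + p = 0$ has roots $r = 1$ and $r = p/q = 1/(d-1)$, so the general solution is $h(x) = A + B (1/(d-1))^x$. The requirement that $h$ be a probability (in particular $h(x) \in [0,1]$ and $h$ being the minimal such solution) forces $A=0$, $B=1$, yielding $h(x) = (1/(d-1))^x$.

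The only subtlety, and the one point I would be careful to justify, is the reduction to a Markov chain on $\mathbb{Z}_{\ge 0}$: this uses the tree symmetry together with the observation that once $D_t$ is specified, the conditional jump rates do not depend on the ambient position of the walkers. Everything else is routine linear-recurrence bookkeeping.
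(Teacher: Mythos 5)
Your proof is correct and follows essentially the same reduction as the paper: both pass to the distance chain and observe that from distance $x\ge 1$ the distance steps down with probability $1/d$ and up with probability $(d-1)/d$. The only difference is the closing step --- the paper verifies directly that $(d-1)^{-X_t}$ is a martingale and applies optional stopping (using $h(0)=1$, $h\le 1$, $h(x)\to 0$), while you solve the two-term linear recurrence and invoke minimality of the hitting probability; these are equivalent standard arguments.
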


\begin{proof} 
If the two particles are at distance $x>0$ then the probability they are at distance $x+1$ after the first jump is  $(d-1)/d$, while they are at distance $x-1$ with probability $1/d$.  
\begin{align*}
\left( \frac{1}{d-1}\right)^x &= \frac{d-1}{d} \left( \frac{1}{d-1}\right)^{x+1}+\frac{1}{d}\left(\frac{1}{d-1}\right)^{x-1} \\
&= \frac{1}{d} \left( \frac{1}{d-1}\right)^{x}+\frac{d-1}{d}\left(\frac{1}{d-1}\right)^{x} 
= \left( \frac{1}{d-1}\right)^{x}
\end{align*}
i.e., if $X_t$ is the distance between two coalescing random walks on a $d$-regular tree then  $((d-1)^{-X_t}$ is a martingale.
Since $h(0)=1$, $h(x)\le 1$ for $x\ge 0$ and $h(x)\to 0$ as $x\to \infty$ the desired result follows from the optional stopping theorem.
\end{proof}

Let $\beta$ be the probability two newborn particles in the dual do not coalesce. 
Since two newborn particles are at distance two from each other  
$$
\beta =  1 - \frac{1}{(d-1)^2}.
$$

\mn
{\bf Theorem \ref{ddieA}}
{\it On a d-regular tree the COBRA dies out if}
 \begin{equation*}
d\beta\sum_{k\geq 2}(k-1)p_k-p_0<0
\end{equation*}

\begin{proof}
In COBRA, a particle dies at rate $p_0$ and gives birth to $k$ particles at rate $dp_k$. To make the dual process more like a branching random walk, when a particle dies and gives birth to a positive number of particles, we declare that the particle did not die but jumped to the location of particle 1. If no offspring were produced then the particle dies. To get an upper bound on the growth of the dual (i) we ignore coalescence between the lineages that are not siblings, and (ii) if $k$ particles are born we ignore coalescence between particles $i>1$ and $j>1$. Note that particles $2, \ldots k$ each have probability $\ge \beta$ of not coalescing with 1. Thus the expected number of the particles that do not coalesce with 1 is $(k-1)\beta$. If we use $\eta^0_t$ to denote the resulting system 
starting from a single particle then 
$$
\frac{d}{dt} E\eta^0_t = \left[-p_0 + d\sum_k p_k(k-1)\beta \right] \, E\eta^0_t
$$
It is immediate that if $d\beta\sum_{k\geq 2}(k-1)p_k-p_0<0$ then
$E|\zeta^0_t| \le  E|\eta^0_t| \to 0$.
\end{proof}

\subsection{Local Survival}\label{lsvpf}

Recall that $\mu = \sum_k kp_k$ is the mean number of offspring in the dual.

\mn
{\bf Theorem \ref{ddlocal}}
{\it Given a d-regular tree $T$, the zealot voter model dies out locally if 
\beq
\mu<\frac{d(1-p_0)+p_0}{2\sqrt{d-1}}.
\label{lsdreg}
\eeq
If $p_0=0$ this is $\mu < d/(2\sqrt{d-1})$.}

\begin{proof}
Using a superscript 0 to denote the process starting with only the root occupied, we need to show 
\begin{equation}
P(\xi^0_t \cap \{0\}\ne\emptyset)\longrightarrow 0\quad\hbox{as $t\to \infty$.} 
\end{equation}
By duality,
\begin{equation}
P(\xi^{0}_t\cap \{0\}\neq\emptyset)=P(\zeta^0_t\cap \{0\}\neq \emptyset)
\end{equation}
Let $\eta^0_t\supset \zeta^0_t$ be the BRW in which particles die at rate $p_0$ and at rate $dp_k$ die and give birth onto $k$ neighbors chosen without replacement.

\begin{lemma}
Let $m(t,x)=E\eta^0_t(x)$ be the expected number of particles on site $x$ at time $t$. Then $m(t,x)$ satisfies the equation
$$
\frac{d}{dt}m(t,x)=-\alpha m(t,x)+\sum_{y\sim x} m(t,y)\mu \quad\hbox{where $\alpha = d(1-p_0)+p_0$}
$$
The solution is given by 
\beq
m(t,x)=e^{(\mu-\alpha)dt}P(S^0_t=x)
\label{slxn}
\eeq
where $S^0_t$ is the random walk on tree $T$ starting from the root that jumps at rate $d\mu$ to a neighbor chosen uniformly at random.
\end{lemma}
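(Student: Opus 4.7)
The plan is to split the proof into two parts: first derive the ODE by tallying the infinitesimal rates at which $\eta^0_t(x)$ changes, then verify the closed-form expression via an ansatz that factors the solution into an exponential prefactor times the transition kernel of a suitably scaled random walk.

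For the ODE, I would account for every event affecting $\eta^0_t(x)$. A particle at $x$ is removed at total rate $\alpha = d(1-p_0) + p_0$, since it dies at rate $p_0$ and additionally, at rate $\sum_{k\ge 1} dp_k = d(1-p_0)$, it dies as part of a combined death-and-birth event. A particle at a neighbor $y \sim x$ deposits a particle at $x$ at rate $\sum_{k \ge 1} dp_k \cdot (k/d) = \mu$, since during a rate-$dp_k$ event at $y$, the site $x$ is among the $k$ uniformly chosen neighbors with probability $k/d$. Taking expectations and summing over $y \sim x$ yields $\partial_t m(t,x) = -\alpha\, m(t,x) + \mu \sum_{y \sim x} m(t,y)$. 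The routine measure-theoretic justification for interchanging the derivative with the infinite sum follows from dominating $|\eta^0_t|$ by a pure-birth branching process with rate $d$, which grows only exponentially in $t$.

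For the closed form, I would use the ansatz $m(t,x) = e^{\lambda t} q(t,x)$ with $q(t,x) = P(S^0_t = x)$, where $S^0_t$ is the random walk on $\mathcal{T}$ that jumps at rate $d\mu$ to a uniformly chosen neighbor. The Kolmogorov forward equation for $q$ is $\partial_t q(t,x) = -d\mu\, q(t,x) + \mu \sum_{y \sim x} q(t,y)$. Substituting the ansatz into the derived ODE and matching the spatial and on-site terms forces $\lambda = d\mu - \alpha$. The initial conditions match because both $m(0,\cdot)$ and $q(0,\cdot)$ are point masses at the root. Uniqueness of the resulting linear system on a suitable weighted $\ell^1$ space (again using the exponential moment bound) then completes the argument.

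I do not anticipate any serious obstacle; the derivation is essentially mechanical once the birth/death rates are correctly tallied and one recognizes the right-hand side of the ODE as the generator of a scaled random walk plus an exponential term. The only point requiring mild care is the combinatorial identity $\sum_{k\ge 1} dp_k (k/d) = \mu$, which hinges on the convention that $p_0$ parameterizes only the pure-death event and not the $k=0$ branching outcome.
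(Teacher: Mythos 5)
Your derivation of the ODE is correct, and in fact more complete than the paper's: the paper simply states the ODE and then verifies the closed form by substitution, without tallying the rates. Your accounting (removal at rate $\alpha = p_0 + \sum_{k\ge 1}dp_k = d(1-p_0)+p_0$, deposition at $x$ from $y\sim x$ at rate $\sum_{k\ge 1}dp_k\cdot(k/d)=\mu$) is exactly right, and the observation that $p_0$ sits outside the branching sum is the correct bookkeeping.

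However, your ansatz computation produces $\lambda = d\mu - \alpha$, whereas the lemma claims the exponent is $(\mu-\alpha)d = d\mu - d\alpha$. These disagree by $(d-1)\alpha$, which is nonzero. Your value is the correct one. A quick sanity check: summing over $x$ gives $E|\eta^0_t| = \sum_x m(t,x)$, and a branching process in which each particle dies at rate $\alpha$ and each event produces on average $d\mu/\alpha$ offspring has expected size $e^{(d\mu-\alpha)t}$, matching your formula but not the lemma's. The paper's verification step has a sign-of-life of this: after correctly computing $\tfrac{d}{dt}\bigl(e^{(\mu-\alpha)dt}P(S^0_t=x)\bigr) = -\alpha d\, e^{(\mu-\alpha)dt}P(S^0_t=x)+\mu\sum_{y\sim x}e^{(\mu-\alpha)dt}P(S^0_t=y)$, the next line silently drops the factor $d$ multiplying $\alpha$, writing $-\alpha m(t,x)$ instead of $-\alpha d\, m(t,x)$. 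So the displayed solution satisfies $\tfrac{d}{dt}m=-\alpha d\,m+\mu\sum_y m$, not the stated ODE. You have, in effect, caught a typographical error in the lemma. Note also that the correct exponent $d\mu-\alpha$ is what makes the downstream argument for Theorem 4 tight: $m(t,0)\le\exp\{(2\sqrt{d-1}\,\mu-\alpha)t\}$, which is negative precisely when $\mu<\alpha/(2\sqrt{d-1})$ — the exact threshold stated in the theorem — whereas the paper's exponent $(\mu-\alpha)d$ would yield a looser condition and leave the theorem's hypothesis unnecessarily strong for the proof as written.

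Your remark about uniqueness on a weighted $\ell^1$ space and the domination by a pure-birth process are the right way to make the interchange of derivative and infinite sum rigorous; the paper omits this but it is routine, as you say.
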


\begin{proof}
To check (\ref{slxn}), note that using RHS for the right-hand side of the equation
\begin{align*}
\frac{d}{dt} RHS
&=(\mu-\alpha)d e^{(\mu-\alpha)dt}P(S^0_t=x)+e^{(\mu-\alpha)dt}\left[-d\mu P(S^0_t=x)+\sum_{y\sim x}d\mu\times\frac{1}{d} P(S^0_t=y)\right]\\
&=-\alpha d e^{(\mu-\alpha)dt}P(S^0_t=x)+\sum_{y\sim x} \mu e^{(\mu-\alpha)dt}P(S^0_t=y)\\
&=-\alpha m(t,x)+\sum_{y\sim x} m(t,y)\mu
\end{align*}
which gives the desired result. 
\end{proof}

Let $X_t=|S^0_t|$ be the distance from the root. 
We couple $X_t$ to a simple random walk $\hat{X}_t$ on $\mathbb{Z}$ that jumps to the left at rate $\mu$ and to the right at rate $(d-1)\mu$ by using the following recipe: $\hat{X}_t$ follows the move of $X_t$ if $X_t\ne 0$; when $X_t$ jumps from 0 to 1, $\hat{X}_t$ jumps to the left with probability $1/d$. Clearly,
\begin{equation*}
\hat{X}_t \leq X_t \hspace{1cm} \forall t\geq 0
\end{equation*}
and hence
\beq
P(S^0_t=0)=P(X_t=0)\leq P(\hat{X}_t \leq 0)
\label{mono}
\eeq
Note that if $\theta \le 0$ then
\begin{align*}
P(\hat{X}_t\leq 0)&\leq Ee^{\theta \hat{X}_t}
=\sum_{k=0}^\infty e^{-d\mu t}\cdot \frac{(d\mu t)^k}{k!}\left(\frac{1}{d}e^{-\theta}+\frac{d-1}{d}e^{\theta}\right)^k\\
&= \exp\left\{ -d\mu t\left[1-\left(\frac{1}{d}e^{-\theta}+\frac{d-1}{d}e^\theta\right)\right]\right\} \\
&= \exp\left\{ -\mu t[d-(e^{-\theta}+ (d-1)e^\theta)]\right\} 
\end{align*}
To optimize this bound we maximize the term in square brackets.  To do this, we set 
$$
0 = \frac{d}{d\theta} [d-(e^{-\theta}+ (d-1)e^\theta)] = e^{-\theta}-(d-1)e^\theta
$$
Solving we have $e^{2\theta} = 1/(d-1)$ or $e^{\theta} = 1/\sqrt{d-1}$, which leads to the bound
$$ 
P(\hat{X}_t\leq 0) \leq \exp\left\{ -(d-2\sqrt{d-1})\mu t\right\}
$$
Using this with \eqref{slxn} and \eqref{mono} we have  
\begin{align*}
m(t,0)&=e^{(\mu-\alpha)dt}P(S^0_t=0)\\
&\le \exp\left\{\left[\left(d-(d-2\sqrt{d-1})\right)\mu-d\alpha\right]t\right\}\\
&= \exp\left\{\left(2\sqrt{d-1}\mu-d\alpha\right)t\right\}
\end{align*}
Since $\alpha=p_0+d(1-p_0)$ our assumption \eqref{lsdreg} implies the exponent is negative. We have completed the proof.
\end{proof}

\mn
{\bf Theorem \ref{dslocalbbd}.} {\it On a $d$-regular tree the zealot voter model survives locally if }
$$
p_0=0\quad\hbox{and}\quad \mu>\frac{d}{\sqrt{d-1}+1}.
$$

\begin{proof}
Choose a self-avoiding path 
$\{e_n,-\infty<n<\infty\}$ in $T^d$  such that $e_0=0$ is the root and $|e_n-e_{n+1}|=1$. This gives an embedding of $\mathbb{Z}$ into $T^d$. 
Now define 
\begin{equation*}
u(n)=P\left( e_n\in \zeta_t\ \text{for some} \ t\right)
\end{equation*}
for $n\ge 0$. By the strong Markov property, for all $n.m\ge 0$
\begin{equation*}
u(n+m)\ge u(n)u(m) 
\end{equation*}
i.e., the sequence is supermultiplicative. This implies that 
$$
\beta(\mu)\equiv \lim_{n\to \infty}[u(n)]^{1/n} = \sup_{m\ge 1} [u(m)]^{1/m}.
$$

Let $S(e_0)$ denote the subtree starting from $e_0$ that does not include $e_{-1}$. Consider a lower bound $\bar\zeta_t$ on the dual COBRA where we birth are only allowed in $S(e_0)$. Our next step is to state a result from the contact process. This is Lemma 4.53 in \cite{L99} but the proof also works for our COBRA.
\begin{lemma}\label{betamsty}
\begin{equation}
\displaystyle\lim_{n\to \infty}\left[\sup_t P(e_n\in \bar{\zeta}_t)\right]^{1/n}=\beta(\mu)
\end{equation}
\end{lemma}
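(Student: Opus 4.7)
The plan is to adapt Liggett's proof of Lemma 4.53 in \cite{L99} to our COBRA. Writing $\bar v(n):=\sup_t P(e_n\in\bar\zeta_t)$, the argument splits into (i) supermultiplicativity of $\bar v$, (ii) the trivial upper bound $\lim \bar v(n)^{1/n}\le\beta(\mu)$, and (iii) the matching lower bound.

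For (i), pick $s_n,s_m$ to be near-maximizers of $\bar v(n),\bar v(m)$. Conditioning on $\{e_n\in\bar\zeta_{s_n}\}$ and applying strong Markov plus additivity of the dual, $\bar\zeta_{s_n+t}$ contains the subprocess started from $\{e_n\}$ at time $s_n$; restricting births further to $S(e_n)\subseteq S(e_0)$ gives a process $\tilde\zeta^{e_n}$ which, by vertex-transitivity of the $d$-regular tree, is distributionally equal to $\bar\zeta^{e_0}$ under the translation $e_j\mapsto e_{j+n}$. This yields $P(e_{n+m}\in\bar\zeta_{s_n+s_m})\ge \bar v(n)\cdot\bar v(m)$, and Fekete's lemma delivers $\bar\gamma:=\lim \bar v(n)^{1/n}=\sup_n\bar v(n)^{1/n}$. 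Part (ii) is immediate from $\bar\zeta_t\subseteq\zeta_t$ for all $t$, so $\bar v(n)\le\sup_t P(e_n\in\zeta_t)\le u(n)$.

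For (iii), introduce $\bar u(n):=P(e_n\in\bar\zeta_t\text{ for some }t)$. A strong-Markov argument at the first visit of $\zeta$ to $e_1$ together with additivity shows $u(n)\le C\,\bar u(n-1)$ (backward excursions into $T\setminus S(e_0)$ contribute to the visit of $e_n$ only after returning through $e_0$, an event that itself reduces to a $\bar\zeta$-type hitting event), so $\lim \bar u(n)^{1/n}=\beta(\mu)$. Next, strong Markov at the first hit $\tau_n$ of $e_n$ combined with additivity gives
$$
P(e_n\in\bar\zeta_t)\ \ge\ \int_0^t P(\tau_n\in du)\,g(t-u),\qquad g(s):=P(e_0\in\bar\zeta^{e_0}_s),
$$
and integrating in $t$ produces $\int_0^\infty P(e_n\in\bar\zeta_t)\,dt\ge G\cdot \bar u(n)$ with $G=\int_0^\infty g<\infty$, since a single-particle walk on the $d$-regular tree with $d\ge 3$ is transient and the branching/death dynamics only speed up escape from a given vertex.

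The main obstacle is the passage from this integral bound to one on the supremum $\bar v(n)$, since the integral alone does not rule out the mass being spread thinly over a very long time. The resolution is to use that the branching random walk dominating $\bar\zeta$ has a deterministic frontier speed $v>0$, so conditional on $\tau_n<\infty$ the hitting time $\tau_n$ concentrates in a window of width at most polynomial in $n$ around $n/v$. Splitting the integral across this window and bounding the integrand by its supremum gives $\bar v(n)\ge c\,\bar u(n)/n^C$ for some constants $c,C>0$; taking $n$-th roots yields $\bar\gamma\ge\lim \bar u(n)^{1/n}=\beta(\mu)$. Together with (i) and (ii) this completes the proof.
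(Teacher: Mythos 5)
The paper does not give a proof of this lemma: it simply says ``This is Lemma 4.53 in \cite{L99} but the proof also works for our COBRA.'' So the comparison here is between your attempted reconstruction and what such a proof would need to establish. Your steps (i) and (ii) are fine: supermultiplicativity of $\bar v(n)=\sup_t P(e_n\in\bar\zeta_t)$ plus the trivial inclusion $\bar\zeta_t\subseteq\zeta_t$ give $\bar\gamma:=\lim\bar v(n)^{1/n}\le\beta(\mu)$. The problem is step (iii), where you have three unjustified or incorrect claims.

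First, the inequality $u(n)\le C\bar u(n-1)$ is asserted on the basis of a vague remark about ``backward excursions''. The unrestricted COBRA has access to the entire subtree hanging off $e_{-1}$, which acts as an unlimited reservoir of particles that can repeatedly re-enter $S(e_0)$ through $e_0$. To show this only changes $u(n)$ by a constant factor uniform in $n$ requires a real argument (e.g.\ summing over excursion counts with a geometric decay), and your sketch does not provide it. Second, and more seriously, the claim $G=\int_0^\infty g(s)\,ds<\infty$ with the justification that ``the branching/death dynamics only speed up escape'' is wrong: branching \emph{increases} the number of particles and therefore increases the probability that $e_0$ is occupied at a later time. In fact, precisely in the regime where this lemma is eventually applied (local survival, $\beta(\mu)>1/\sqrt{d-1}$), one expects $g(s)$ not to tend to zero, so $G=\infty$ and the integral bound $\int_0^\infty P(e_n\in\bar\zeta_t)\,dt\ge G\,\bar u(n)$ becomes vacuous. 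Third, the passage from the integral bound to the supremum bound via ``concentration of $\tau_n$ in a polynomial window'' is asserted, not proved. The dominating BRW's linear frontier speed gives only a \emph{lower} bound on $\tau_n$ (since $\bar\zeta\subseteq\eta$); coalescence can only slow the COBRA down, so you do not get an upper bound on $\tau_n$ for free. Moreover, even granting polynomial-width concentration, bounding $\min_{s\le \mathrm{poly}(n)} g(s)$ would still cost a factor that is sub-exponential only if $g$ decays at most polynomially, which is not established. So the mechanism you propose for closing (iii) does not work as written. A correct adaptation of Liggett's argument must avoid relying on $G<\infty$ and must deal uniformly with both the locally-surviving and locally-dying regimes.
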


\noindent Since $\bar{\zeta_t}\subset \zeta_t$, the desired result follows from the next two Lemmas.

\begin{lemma}\label{lsv1}
If $\beta(\mu)>1/\sqrt{d-1}$, then $\displaystyle\inf_t P\left( e_0 \in \bar{\zeta_t}\right)>0$.
\end{lemma}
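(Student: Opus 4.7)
The plan is to adapt the proof of Lemma 4.57 in Liggett's book \cite{L99} to our COBRA setting via a ``reach and return'' scheme that exploits the automorphism symmetry of $T^d$. First, by Lemma \ref{betamsty} and the hypothesis $\beta(\mu) > 1/\sqrt{d-1}$, fix $\varepsilon>0$ and choose an integer $N$ and a time $s_N$ with
$$
P(e_N \in \bar\zeta_{s_N}) \ge \left(\frac{1+\varepsilon}{\sqrt{d-1}}\right)^{N}.
$$
This reach estimate decays strictly slower than the ``random-walk cost'' $(1/\sqrt{d-1})^N$, which is exactly the slack the branching supplies.

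Next I would establish a matching lower bound on the return probability $P(e_0 \in \bar\zeta^{e_N}_{s_N})$. The natural tool is the tree automorphism $\phi\colon T^d\to T^d$ defined by $\phi(e_k)=e_{N-k}$; it swaps $e_0$ and $e_N$ and exchanges the branches off the path $e_0,\dots,e_N$. For the \emph{unrestricted} COBRA $\zeta$ the invariance of law under $\phi$ gives the exact symmetry $P(e_0\in\zeta^{e_N}_{s_N}) = P(e_N\in\zeta^{e_0}_{s_N})$, which is at least as large as the reach bound. To transfer this to the restricted process $\bar\zeta$, note that $\phi$ maps the single forbidden branch (off $e_{-1}$) to the branch off $e_{N+1}$, which plays no role on the reach side either. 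A coupling that re-routes any trajectory in $\zeta$ leaving $S(e_0)$ onto the symmetric subtree on the other side then shows
$$
P(e_0\in \bar\zeta^{e_N}_{s_N}) \ge c(d)\left(\frac{1+\varepsilon}{\sqrt{d-1}}\right)^{N}
$$
for some constant $c(d)>0$ depending only on the degree $d$, and not on $N$.

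Combining the two estimates by the strong Markov property at time $s_N$ and the additivity of the dual,
$$
P(e_0\in\bar\zeta_{2s_N}) \ge P(e_N\in\bar\zeta_{s_N}) \cdot P(e_0\in\bar\zeta^{e_N}_{s_N}) \ge c(d)\left(\frac{(1+\varepsilon)^{2}}{d-1}\right)^{N} > 0,
$$
which yields a positive lower bound at the single time $t=2s_N$. To upgrade this to the required uniform estimate $\inf_t P(e_0\in \bar\zeta_t)>0$, I would combine two ingredients: on the compact interval $[0,2s_N]$ the map $t\mapsto P(e_0\in\bar\zeta_t)$ is continuous and strictly positive (since $\bar\zeta_0=\{e_0\}$ and the rates are bounded), hence has a positive infimum; on $[2s_N,\infty)$ a standard restart/renewal argument at the successive return times of $e_0$ to $\bar\zeta$ --- each finite with positive probability by the combined bound above --- produces a renewal process whose stationary fraction gives $\liminf_{t\to\infty} P(e_0\in\bar\zeta_t)>0$.

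The main obstacle will be the coupling step for the return estimate. The restriction to $S(e_0)$ breaks the full automorphism symmetry of $T^d$, and one must show with care that the geometry lost is only one ``backward'' edge, so that the associated factor is bounded by some $c(d)$ independent of $N$. This is exactly where the critical threshold $1/\sqrt{d-1}$ appears: matching the reach exponent to the return exponent under $\phi$ is what makes $\beta(\mu)>1/\sqrt{d-1}$ the right sufficient condition, rather than any weaker bound one might obtain by ignoring the branching assistance on the return trip.
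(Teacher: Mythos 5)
The proposal does not follow the paper's argument and, more importantly, contains a gap that I do not see how to close. The paper's proof (following Liggett's Proposition~4.57) constructs an embedded \emph{supercritical} branching process $B_j$ of particles at distance $nj$ from the root, with $|B_j|$ growing like $((d-1)a)^{nj}$ for an $a>1/\sqrt{d-1}$. It then sets up the recursion $r_{i+1}\ge f(r_i)$ with $f(r)=\epsilon[1-(1-r)^N]a^{nj}$ and $N=\lfloor \epsilon((d-1)a)^{nj}\rfloor$; the hypothesis $a>1/\sqrt{d-1}$ enters precisely to make $f'(0)\approx \epsilon^2[a^2(d-1)]^{nj}>1$ for large $j$, which yields a strictly positive fixed point $r^*$ and hence $r_i\ge r^*$ for all $i$. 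The essential mechanism is that the \emph{number} of particles available to attempt the return to $e_0$ grows at rate $(d-1)a$ per level, while the per-particle return cost is $a$, and $a^2(d-1)>1$ makes the product diverge. Your ``one-particle reach and return'' picture discards exactly this multiplicity.

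Concretely, your combined bound $P(e_0\in\bar\zeta_{2s_N})\ge c(d)\bigl((1+\varepsilon)^2/(d-1)\bigr)^N$ is (for $\varepsilon$ small, which is all the hypothesis gives) exponentially small in $N$; it is positive only because $N$ is fixed. Positivity of $P(e_0\in\bar\zeta_{t_0})$ at a single $t_0$ holds for \emph{any} nontrivial COBRA, including a subcritical one for which $P(e_0\in\bar\zeta_t)\to 0$, so by itself it cannot yield $\inf_t P(e_0\in\bar\zeta_t)>0$. The ``restart/renewal'' step then cannot be made rigorous: the successive return times of $e_0$ are finite only \emph{with positive probability}, not almost surely (the process can die out from any finite configuration), so the putative renewal process terminates after a geometric number of cycles and has no stationary fraction. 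What rescues the paper's argument is that at the $j$-th stage there are order $((d-1)a)^{nj}$ independent restricted COBRAs trying to reach the root, and the probability that at least one succeeds, $1-(1-r_i)^{N}$, stays bounded below even when $r_i$ is small. Without the embedded branching process $B_j$ and the fixed-point iteration, the threshold $1/\sqrt{d-1}$ never does its job, and the conclusion does not follow. (Your automorphism symmetry for the return leg is a reasonable heuristic and can likely be justified up to constants, but it is a secondary issue compared with the missing multiplicity argument.)
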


\begin{lemma}\label{muontr}
If $\mu>d/(\sqrt{d-1}+1)$, then $\beta(\mu)>1/\sqrt{d-1}$.
\end{lemma}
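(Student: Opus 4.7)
The plan is to follow a tagged particle in the dual process $\zeta_t$ whose rule biases movement toward the target vertex $e_n$ (rather than toward the root as in Theorem \ref{qpf}), and then invoke a standard gambler's ruin computation to lower bound $u(n)$.

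Starting from the single particle at $e_0$, I define a tagged lineage as follows. Suppose the tagged particle sits at $y\ne e_n$, and let $y^*$ denote the unique neighbor of $y$ that is closer to $e_n$, which is well defined on a tree. When the particle at $y$ undergoes a branching event producing $k\ge 1$ offspring at $k$ uniformly chosen neighbors, we tag the offspring at $y^*$ if it exists and otherwise tag one of the $k$ offspring uniformly at random. Because $p_0=0$, every branching event produces at least one offspring, so the tagged particle never dies.

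A direct computation of rates shows that the tagged particle moves to $y^*$ at rate $\sum_{k\ge 1} dp_k\cdot(k/d)=\mu$ and, by symmetry, moves to each of the other $d-1$ neighbors of $y$ at rate $(d-\mu)/(d-1)$. Let $D_t$ be the distance from the tagged particle to $e_n$. Since on a tree exactly one neighbor of each $y\ne e_n$ is closer to $e_n$ and the remaining $d-1$ are farther, $D_t$ is a continuous-time nearest-neighbor random walk on $\{0,1,2,\ldots\}$ with downward rate $\mu$ and upward rate $d-\mu$. Classical gambler's ruin then gives, with $D_0=n$,
\[
P\bigl(D_t=0\text{ for some }t\bigr)\;\ge\;\min\!\left(1,\,\left(\frac{\mu}{d-\mu}\right)^n\right).
\]
Since the tagged trajectory is a subset of the sites ever visited by $\zeta_t$, this yields $u(n)\ge (\mu/(d-\mu))^n$, and therefore $\beta(\mu)\ge\mu/(d-\mu)$.

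To close, observe that $\mu/(d-\mu)>1/\sqrt{d-1}$ rearranges to $\mu(\sqrt{d-1}+1)>d$, which is precisely our hypothesis. Hence $\beta(\mu)>1/\sqrt{d-1}$, as claimed. There is no substantive obstacle in this plan; the one conceptual step is to bias the tag toward the distant target $e_n$ rather than toward the root, which replaces the need for recurrence of the tagged walk, as in Theorem \ref{qpf} (which requires $\mu>d/2$), by the much weaker requirement of a positive exponential hitting probability.
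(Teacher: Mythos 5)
Your argument is correct and is essentially the same as the paper's: both proofs track a tagged particle in the COBRA biased along the geodesic between $e_0$ and $e_n$, observe that its distance to the target is a nearest-neighbor walk that steps toward the target with probability $\mu/d$, and apply gambler's ruin (via the exponential martingale $(\mu/(d-\mu))^x$) to get $u(n)\ge(\mu/(d-\mu))^n$. The only cosmetic difference is direction: the paper starts the dual at $e_n$ and tags toward the root $e_0$ (implicitly using the automorphism of the $d$-regular tree swapping $e_0$ and $e_n$ to identify this hitting probability with $u(n)$), while you start at $e_0$ and bias the tag toward $e_n$, which avoids that symmetry step but is otherwise identical.
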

\end{proof}

\begin{proof}[Proof of Lemma \ref{lsv1}]
The proof here is almost identical to the one on pages 99-100 of Liggestt \cite{L99}. 
According to Lemma \ref{betamsty} and our assumption, we can fix constants $a>1/\sqrt{d-1}$, $n\ge1$ and $s>0$ such that
\begin{equation}\label{sbp}
P\left(e_n\in \bar{\zeta}_s \right)=a^n
\end{equation}
We now follow the proof of Proposition 4.57 in Liggett \cite{L99} to construct an embedded branching process. Let $B_0=\{e\}$ and $B_1=\{x\in \bar{\zeta}_s: |x-e|=n\}$. We ignore all the births outside $S(x)$ and apply the same rules leading from $B_0$ to $B_1$ to obtain a random subset $B(x)$ of $\{y\in S(x)\cap \bar{\zeta}_{2s}: |y-e|=2n\}$. Let $B_2=\cup_{x\in B_1}B(x)$. We repeat the same rule to construct a branching process $B_j$. Note $B_j\subset \bar{\zeta}_{js}$. Moreover $B_j$ is supercritical since by (\ref{sbp}) the offspring distribution has mean $(d-1)^na^n>1$. Then 
\begin{equation*}
\displaystyle\lim_{j\to\infty}\frac{|B_j|}{\left((d-1)^na^n\right)^j}
\end{equation*}
exists and is positive with positive probability. As a result, we can find an $\epsilon$ such that for all sufficiently large $j$,
\begin{equation*}
P\left(|B_j|>\epsilon \left((d-1)a\right)^{nj}\right)>\epsilon
\end{equation*}
We will show particles from the subchain $\{B_{ji}\}^\infty_{i=0}$'s are sufficient to make the process survive locally. Since it takes time $ijs$  to get to $B_{ji}$, we let
\begin{equation}
r_i=P(0\in \bar{\zeta}_{2ijs})
\end{equation}
It follows from the strong Markov property that
\begin{equation}\label{riter}
r_{i+1}\ge P(x\in\bar{\zeta}_{2(i+1)js}\ {\rm for\ some} \ x\in B_{j})P(e_{nj}\in \bar{\zeta}_{js})
\end{equation}
Let $\lfloor y\rfloor$ be the largest integer $\le y$ and let $N=\lfloor \epsilon((d-1)a)^{nj}\rfloor$. This is
\begin{align}
&\ge P(|B_j|>N)[1-(1-r_i)^N]P(e_{nj}\in \bar{\zeta}_{js})
\nonumber\\
&\ge \epsilon [1-(1-r_i)^N]P(e_{nj}\in \bar{\zeta}_{js})\label{interiter1}
\end{align}
Using the strong Markov property on the last probability gives
\begin{equation}\label{interiter2}
P(e_{nj}\in\bar{\zeta}_{js})\ge \left[P(e_n\in \bar{\zeta}_s)\right]^j=a^{nj}
\end{equation}

\mn
Let $f(r)=\epsilon[1-(1-r)^N]a^{nj}$. Combining (\ref{interiter1}), (\ref{interiter2}) and (\ref{riter}) gives
\begin{equation*}
r_{i+1}\ge f(r_i)
\end{equation*}
Note $f(r)$ is increasing over $[0,1]$ with $f(0)=0$. Moreover, $f'(r)=\epsilon a^{nj}N(1-r)^{N-1}$.  So using the definition of $N$,
\begin{align*}
f'(0)&=\epsilon a^{nj}N\\
&\ge\epsilon a^{nj}[\epsilon ((d-1)a)^{nj}-1]\\
&=\epsilon^2 [a^{2}(d-1)]^{nj}-\epsilon a^{nj}
\end{align*}
Since $a>1/\sqrt{d-1}$ this is $>1$ if $j$ is chosen large enough. Thus $f(r)$ has a fixed point $r^*\in (0,1]$. We will prove by induction that 
\begin{equation}\label{rge0}
r_i\ge r^*, \ \forall i
\end{equation}
\mn
When $i=0$, the inequality is trivial since $r_0=1$. Suppose $r_i\ge r^*$. By the monotonicity of $f(r)$, we have 
\begin{equation*}
r_{i+1}\ge f(r_i)\ge f(r^*)=r^*
\end{equation*}
To generalize (\ref{rge0}) to all time $t$. Note particles die at rate $d$. Precisely
$$
P(e\in \bar{\zeta}_t|e\in\bar{\zeta}_{2ijs})\ge e^{-d(t-2ijs)}
$$
In particular
\begin{equation*}
P(e\in \bar{\zeta}_t)\ge e^{-djs}r_i, \ \text{if}\ 2ijs<t<2(i+1)js
\end{equation*}
We have completed the proof.
\end{proof}

\begin{proof}[Proof of Lemma \ref{muontr}]
Consider a simple random walk on $\mathbb{Z}$ which takes steps
$$
\begin{cases}
+1 &\text{with probability}\ \frac{d-\mu}{d}\\
-1 &\text{with probability}\  \frac{\mu}{d}
\end{cases}
$$
Repeating the proof of Lemma \ref{nohit} shows that $\phi(x)=\left(\frac{\mu}{d-\mu}\right)^x$ is a martingale. The stopping time theorem for martingales shows
\begin{equation*}
P_n(T_0<\infty)=\left(\frac{\mu}{d-\mu}\right)^n
\end{equation*}
Note
\begin{equation*}
P(e_n\in \zeta_t \ \text{for some} \ t>0)\ge P_{e_n}(T_e<\infty) \ge \left(\frac{\mu}{d-\mu}\right)^n\end{equation*}
where the second one is the probability that the COBRA initiated at $e_n$ ever visits the root. Then
\begin{equation*}
[u(n)]^{\frac{1}{n}}\ge \frac{\mu}{d-\mu}
\end{equation*}
Since $\mu>\frac{d}{\sqrt{d-1}+1}$, by assumption we have 
\begin{align*}
\beta(\mu)&=\lim_{n\to \infty}[\mu(n)]^{1/n} \ge \frac{d}{d-\mu}-1\\
&> \frac{d}{d-d/(\sqrt{d-1}+1)}-1=\frac{1}{\sqrt{d-1}}
\end{align*}
which completes the proof of Lemma \ref{muontr} and hence the proof of Theorem \ref{dslocalbbd}.
\end{proof}

\clearp

\section{Results for Galton-Watson Trees} \label{GWsec}

\subsection{Survival of COBRA}

We will now prove Theorem \ref{prtbxn}. To lead up to that we will describe the proof of the main result in \cite{DZ} in dimensions $d\ge 3$.
The model under consideration there is a biased voter model with small bias. Jumps at $x$ from $0 \to 1$ occur st rate $(1+\ep)f_1(x)$, where
$f_i(x)$ is the fraction of neighbors in state $i$, while jumps from $1 \to 0$ occur at rate $f_0(x)$. Suppose for concreteness that
the neighborhood consists of the $2d$ nearest neighbors. As in the case of the zealot voter model the process is additive in the sense of Harris \cite{H76}
and can be constructed from a graphical representation with independent Poisson processes, $T^{x,i}_n$, $n \ge 1$ for $i=1,2$. Let $e_1, \ldots, e_{2d}$
be an enumeration of the nearest neighbors of 0.

\begin{itemize}
  \item The $T^{x,1}_n$ have rate 1 and have associated independent random variables $U^{x,1}_n$ that are uniform on $\{ 1, 2, \ldots 2d \}$.
At time $T^{x,i}_n$ we write a $\delta$ at $x$ that will kill a 1 at $x$ and draw an arrow from $x+e(U^{x,i}_n)$ to $x$. By considering the four
cases for the states of $x+e(U^{x,i}_n)$ and $x$ we can easily check that this gadget causes $x$ to imitate its neighbor. 
\item The $T^{x,2}_n$ have rate $\ep$ and have associated independent random variables $U^{x,2}_n$ that are uniform on $\{ 1, 2, \ldots 2d \}$.
At time $T^{x,i}_n$ we draw an arrow from $x+e(U^{x,i}_n)$ to $x$ which will cause $x$ to be 1 if $x+e(U^{x,i}_n)$ is.
\end{itemize} 

Since branching occurs at rate $\ep$ in dual, the suggests that we should run time at rate $1/\ep$ and scale space by $1/\sqrt{\ep}$ to mkae the dual process
converge to a branching Brownian motion. One complication is that new born particles will coalesce with their parent with a probability $\gamma$ which is the probability a random walk started at $e_1$ returns to 0. It is not hard to show that the probability such a coalescence will occur after time $1/\sqrt{\ep}$ 
tends to 0. Thus in order for the sequence of processes to be tight, we do not add the newly born particle until time $1/\sqrt{\ep}$ has elapsed. Other estimates in the proof show that it is unlikely for particle to coalesce with another particle that is not its parent, so the sequence of rescaled processes
converges to a branching random walk in which new particle are born at rate $\gamma$.

In \cite{DZ} this observation is combined with a block construction to prove the existence of a stationary distribution in a ``hybrid zone'' in which the process on $x_1 \ge 0$ is a biased voter model that favors 1 and on $x_1<0$ the process is a biased voter model favoring 0. Things are simpler for the zeaalot voter model on trees. If we only want to prove survival of the dual it is enough to prove that when time is run at rate $1/\ep$
the size of the dual converges to a supercritical branching process. Taking into account the fraction of time a random walk spends at vertices of
degree $k$ we arrive at:

\mn
{\bf Theorem 7.} {\it Let $\delta>0$. If $\ep>0$ is small enough then the COBRA dies out if
$$
\sum_k p_k (\mu_{m,k} - 1) - p_0 < -\delta
$$
and survives if the last quantity is $> \delta$.}

\subsection{Local Survival}

\subsubsection{Proof of Theorem \ref{lcdout}}

Since the branching random walk gives an upper bound for the COBRA, it suffices to show 

\begin{lemma}\label{LSbrw}
If $p_0=0$, then on a d-regular tree, the threshold for the local survival of $\eta^0_t$ satisfies
$$
\mu_l(\mathcal{T}^d)=d/(2\sqrt{d-1})
$$
where $\eta^0_t$ is the branching random walk starting with 1 particle at the root
\end{lemma}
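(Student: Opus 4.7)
The plan is to prove both directions of the threshold $\mu_l(\mathcal{T}^d)=d/(2\sqrt{d-1})$ separately.

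For the \emph{extinction direction} ($\mu<d/(2\sqrt{d-1})$) essentially nothing new is needed. With $p_0=0$ we have $\alpha=d$, and the bound derived in the proof of Theorem~\ref{ddlocal} reads $E\eta^0_t(0)\le\exp\bigl((2\sqrt{d-1}\mu-d)t\bigr)$, whose exponent is negative. Markov's inequality at integer times and Borel--Cantelli give $\eta^0_n(0)=0$ for all large integer $n$ almost surely; a brief bridging argument, using that particles appear at $0$ only through a branching at a neighbor and that the probability of such a branching in $[n,n+1]$ is controlled by $\mu\int_n^{n+1}\sum_{y\sim 0}E\eta^0_s(y)\,ds$ (likewise summable), extends the conclusion to continuous time.

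For the \emph{survival direction} ($\mu>d/(2\sqrt{d-1})$) I would follow the Pemantle--Stacey template, reusing the framework of the proof of Theorem~\ref{dslocalbbd}. Along a self-avoiding ray $\{e_0,e_1,\ldots\}$ define $u(n)=P(e_n\in\eta^0_t\text{ for some }t)$ and set $\beta(\mu)=\lim u(n)^{1/n}$; the limit exists by supermultiplicativity (strong Markov at the first hitting of $e_n$ together with independence of BRW subtrees). The proof of Lemma~\ref{lsv1} uses only this supermultiplicative structure and the independence of distinct subtrees, both of which hold even more cleanly for the BRW, so its conclusion ``$\beta(\mu)>1/\sqrt{d-1}\Rightarrow$ local survival'' carries over, and it suffices to show $\beta(\mu)>1/\sqrt{d-1}$ whenever $\mu>d/(2\sqrt{d-1})$.

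To obtain that improved estimate on $\beta$, I would exploit the first-moment identity $m(t,e_n)=\exp\bigl(d(\mu-1)t\bigr)P(S^0_t=e_n)$ from the proof of Theorem~\ref{ddlocal}, where $S^0_t$ is the continuous-time simple random walk on $\mathcal{T}^d$ with jump rate $d\mu$. A first-step analysis shows that the hit-time Laplace transform $\phi(s)=E_{e_1}[e^{-s\tau_{e_0}}]$ solves $(d-1)\phi^2-\bigl((d\mu+s)/\mu\bigr)\phi+1=0$, which at $s=-d(\mu-1)$ reduces to $(d-1)\phi^2-(d/\mu)\phi+1=0$. The discriminant $d^2/\mu^2-4(d-1)$ vanishes exactly at $\mu=d/(2\sqrt{d-1})$ with critical double root $\phi=1/\sqrt{d-1}$, so that for $\mu>d/(2\sqrt{d-1})$ the weighted Green function $\int_0^\infty m(t,e_n)\,dt$ diverges for every $n$. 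A Paley--Zygmund second-moment estimate applied to the occupation time $I_n^T=\int_0^T\eta^0_t(e_n)\,dt$ over a well-chosen window $T=T(n)$ then converts the divergence into a geometric lower bound $u(n)\ge c\rho^n$ with some $\rho>1/\sqrt{d-1}$, delivering the needed inequality $\beta(\mu)>1/\sqrt{d-1}$.

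\emph{Main obstacle.} The delicate step is the second-moment estimate on $I_n^T$. In the original Pemantle--Stacey setting, offspring along distinct edges are birthed independently, whereas in our model the $K$ offspring of a single branching event are placed on a uniformly random $K$-subset of the $d$ neighbors without replacement, so the two-particle joint distribution at a branching event requires extra care. I expect this can be handled either by coupling with an auxiliary BRW whose edges are birthed independently at the appropriate marginal rates, or by a direct many-to-two computation that uses the same non-coalescence factor $\beta=1-1/(d-1)^2$ already appearing in the proof of Theorem~\ref{ddieA}; this is exactly the modification to Pemantle--Stacey's proof referred to in the introduction.
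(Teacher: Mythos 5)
Your proposal takes a genuinely different route from the paper. The paper proves Lemma~\ref{LSbrw} by introducing \emph{evolutionary walks} in the graphical representation, counting the expected number of length-$n$ walks along a fixed path as $(\mu/d)^n$, and comparing with the loop-counting quantity $L=\lim_n M(e_0,2n)^{1/2n}$. They first establish $\mu_l(\mathcal{T}^d)=d/L$ (embedding a supercritical branching process of returning walks when $\mu>d/L$, and bounding the total expected number of walks through $e_0$ by $\sum_n(\mu/d)^n M(e_0,n)<\infty$ when $\mu<d/L$), and then compute $L=2\sqrt{d-1}$ from $M(e_0,2n)=\binom{2n}{n}(d-1)^n$ and Stirling. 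This is a short, essentially first-moment combinatorial argument that avoids any second-moment computation. Your plan instead transplants the $\beta(\mu)$/supermultiplicativity framework from Theorem~\ref{dslocalbbd} and aims to beat the bound $\beta(\mu)\ge\mu/(d-\mu)$ of Lemma~\ref{muontr} by a Green-function plus Paley--Zygmund estimate.

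Your extinction direction is fine (and is indeed how one would read Theorem~\ref{ddlocal}; note there is a typo in the paper where the solution should read $m(t,x)=e^{(d\mu-\alpha)t}P(S^0_t=x)$, which you have correctly compensated for since your final exponent $2\sqrt{d-1}\mu-d$ is right). Your first-step analysis giving $(d-1)\phi^2-(d/\mu)\phi+1=0$ at $s=-d(\mu-1)$ and the identification of the critical double root $\phi=1/\sqrt{d-1}$ at $\mu=d/(2\sqrt{d-1})$ is also correct and nicely explains why the threshold sits where it does.

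The gap is exactly where you flag it. Divergence of $\int_0^\infty m(t,e_n)\,dt$ alone does not give $u(n)\ge c\rho^n$ with $\rho>1/\sqrt{d-1}$; that requires showing $E(I_n^T)^2\le C\,(E I_n^T)^2\,\rho^{-n}$ for a suitable window $T(n)$, and this needs a genuine many-to-two computation taking into account that the $k$ offspring of one branching event occupy a uniform $k$-subset of the $d$ neighbors (so they are negatively correlated rather than independent). You describe two plausible ways to handle this (auxiliary independent-edge BRW coupling, or a direct many-to-two using the non-coalescence factor $\beta=1-1/(d-1)^2$), but you do not carry either out, and without it the inequality $\beta(\mu)>1/\sqrt{d-1}$ is not established. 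You should also note that your route additionally requires verifying the analogue of Lemma~\ref{betamsty} for the BRW, which you use implicitly when feeding $\beta(\mu)$ into the embedded-branching-process construction of Lemma~\ref{lsv1}. The paper's loop-counting argument sidesteps both of these issues entirely, which is what makes it the cleaner proof here; your approach, if completed, would give an alternative proof and would likely be more robust on non-regular trees, but as written it is incomplete.
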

To prove this result, define $M(v,\ n)$ to be the number of oriented loops of length $n$ starting from vertex $v$. It is well-known that the limit $L=\displaystyle\lim_{n\to \infty}M(v,\ 2n)^{1/2n}=\displaystyle\sup_{n\to \infty}M(v,\ 2n)^{1/2n}$ exists for all graphs, independent of the choice of vertex. This follows from a simple supermultiplicativity argument. Furthermore, define an  {\it evolutionary walk } from vertex $u$ to vertex $v$ to be a sequence $0\le T_{n_0}^{x_0,i_0}<T_{n_1}^{x_1,i_1}<\dots<T_{n_m}^{x_m, i_m}<\infty$ with $x_0=u, \ x_m=v$. Precisely, this corresponds to a path in the graphical representation such that the fluid can flow from $u$ to $v$. By definition, for a fixed path of length $n$ on the tree, the expected number of evolutionary walks on this path is $(\mu/d)^n$. This is because when a branching event occurs, the expected number of particles landing on a certain neighbor is $\mu/d$. We will show

\begin{lemma}
Suppose $L=\displaystyle\lim_{n\to \infty}M(v,\ 2n)^{1/2n}$. Then $\mu_l(\mathcal{T}^d)=d/L$.
\end{lemma}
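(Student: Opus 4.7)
The plan is to compute the first moment $m(t,x) := E\eta_t^0(x)$ exactly in terms of the loop counts $M(0,n)$, read off the exponential growth exponent as $\mu L - d$, and then close both halves of the equality $\mu_l(\mathcal T^d) = d/L$. First I would set up the ODE for the mean: since $p_0 = 0$, each particle branches at total rate $d = \sum_k d p_k$, and the expected number of offspring deposited on any specific neighbor per branching event is $\mu/d$. Hence
$$\frac{\partial m}{\partial t}(t,x) = -d\, m(t,x) + \mu\sum_{y\sim x} m(t,y),$$
with $m(0,\cdot) = \delta_0$. The semigroup solution is $m(t,\cdot) = e^{t(\mu A - dI)}\delta_0$ where $A$ is the adjacency operator, so
$$m(t,0) = e^{-dt}\sum_{n\ge 0}\frac{(\mu t)^n}{n!}\, M(0,n).$$
Writing $e^{t(\mu A - dI)} = e^{td(\mu-1)}\, e^{t\mu(A - dI)}$ identifies the second factor as the semigroup of a CTRW with spectral radius $L$, from which a standard exponential-type estimate on the generating function $\sum_n M(0,n) z^n/n!$ (using $M(0,2n)^{1/(2n)}\to L$ and $M(0,2n+1)=0$) gives
$$\tfrac1t \log m(t,0) \xrightarrow[t\to\infty]{} \mu L - d.$$

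For the extinction direction $\mu < d/L$: then $m(t,0)$ decays exponentially, so $E\int_0^\infty \eta^0_t(0)\,dt = \int_0^\infty m(t,0)\,dt < \infty$. Hence the total sojourn time at the root is a.s.\ finite, and since every particle that visits the root stays there for an independent Exp$(d)$ amount of time before its next event, only finitely many particles can ever visit the root. This yields local extinction and the bound $\mu_l(\mathcal T^d) \ge d/L$.

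For the survival direction $\mu > d/L$, I would adapt the embedded-Galton-Watson / supermultiplicative scheme already used in the proof of Lemma \ref{lsv1}. The input is the combinatorial interpretation emphasized in the text: the expected number of evolutionary walks along a fixed loop of length $2n$ through the root is $(\mu/d)^{2n}$, so summing over loops the expected number of evolutionary walks returning to the root grows like $\sum_n (\mu/d)^{2n} M(0,2n)$, which diverges precisely when $(\mu/d)L > 1$. Fix $n$ so that this count exceeds any prescribed threshold on a time window $[0,T]$, and use a second-moment / Paley--Zygmund estimate to conclude $P(0\in\eta^0_T) \ge c > 0$; the Markov property on disjoint time slabs then embeds a supercritical Galton--Watson process whose survival forces the root to be visited infinitely often with positive probability, giving $\mu_l(\mathcal T^d)\le d/L$.

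The main obstacle is the survival direction: first-moment growth of $m(t,0)$ alone does not directly imply $\liminf_t P(0\in\eta^0_t)>0$, because the mass could in principle be carried by rare events of very large conditional size. The work is therefore in controlling the correlations between two particles at the root at the same time, i.e.\ in bounding $E[\eta^0_t(0)^2]$ by a constant multiple of $m(t,0)^2$ on exponentially long time scales. The BRW structure and the fact that common-ancestor contributions can be integrated against loop counts make this tractable, but this is where the proof spends most of its effort.
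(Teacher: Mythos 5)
Your extinction direction is essentially the paper's, just rewritten. You derive
\[
m(t,0)=e^{-dt}\sum_{n\ge 0}\frac{(\mu t)^{n}}{n!}\,M(0,n),
\]
and integrate; but note $\int_0^\infty m(t,0)\,dt = \frac{1}{d}\sum_{n}(\mu/d)^n M(0,n)$, which is exactly the series the paper bounds (the expected total number of evolutionary walks returning to the root, with each visit lasting an $\mathrm{Exp}(d)$ duration). So the two formulations coincide, and the conclusion "finitely many root visits a.s." is sound. The first-moment identity and the growth exponent $\mu L - d$ are correct and a nice way to package the computation.

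The survival direction is where you diverge from the paper and where your proposal has a genuine gap. You propose to prove $P(0\in\eta^0_T)\ge c>0$ via Paley--Zygmund, which requires $E\bigl[\eta^0_T(0)^2\bigr]\le C\,m(T,0)^2$ uniformly for large $T$. You correctly identify this as the main obstacle, but you do not close it, and it is not a routine estimate: the particles at the root at time $T$ are strongly correlated through shared ancestry, and for a BRW near criticality the second moment of a single-site occupation number need not be comparable to the square of the first moment. The paper avoids this entirely. It lets $X_{2nk}$ denote the number of evolutionary walks of length $2nk$ from root to root and observes that $\{X_{2nk}\}_{k\ge0}$ dominates a Galton--Watson process with offspring law $X_{2n}$, because each walk can be extended from its terminal Poisson event by a fresh, independent collection of length-$2n$ returns. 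Supercriticality of this GW process then follows from the \emph{first moment alone}: $EX_{2n}=(\mu/d)^{2n}M(0,2n)>1$ for large $n$ when $\mu>d/L$, with no second-moment input required. The paper also includes a step you omit: it must be shown that infinitely many evolutionary walks cannot all be completed in a bounded time window (so that "infinitely many returning walks" really gives local survival); the paper's $\Gamma(d,n)\le (dT)^n/n!$ bound plus Borel--Cantelli handles this, and your slab argument would need an analogous control. In summary: same extinction argument, but for survival you should replace the Paley--Zygmund route with the paper's evolutionary-walk branching process, which turns the problem into a first-moment calculation.
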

\begin{proof}
Let $X_n$ be the number of evolutionary walks of length $n$ starting and ending at the root $e_0$. Note $\{X_{2nk}\}$ dominates a branching process with offspring distribution given by $X_n$. In particular,
$$
EX_n\ge (\mu/d)^{2n}M(e_0,2n)
$$

So if $\mu>d/L$, this branching process is supercritical if $n$ is sufficiently large. Choose $n$ so that the above expectation is $>1$. Note $\forall T>0$, the expected number evolutionary walks of length $n$ by time $T$ is 
$$
\le \Gamma (d,n)=\frac{d^n}{(n-1)!}\int^T_0 e^{-s}s^{n-1}ds\le\frac{(dT)^n}{n!}
$$
The $\Gamma(d,n)$ comes from a sum of $n$ exponential distributions with parameter $d$. Note this is summable with respect to $n$. By Borel-Cantelli Lemma, the maximal length of evolutionary walks within any finite time $T$ is bounded and thus the root has to be visited infinitely often. For the other direction, note the expected number of evolutionary walks traversing $e_0$ is bounded by
$$
\sum^\infty_{n=1} (\mu/d)^nM(e_0,n)<\infty,\  {\rm if}\ \mu<d/L
$$
The proof is complete.
\end{proof}

\begin{proof}[Proof of Lemma \ref{LSbrw}]
It remains to compute $L$. This is given by Pemantle and Stacey \cite{PemSta}. To summarize, note for an oriented loop of length $2n$, $n$ steps are up (i.e. closer to $e_0$) $n$ steps are down. At each step, there are $d-1$ choices to move farther away. Hence
$$
M(0,2n)=\binom{2n}{n} (d-1)^n
$$
Use Stirling formula the desired result follows.
\end{proof}

\subsubsection{Condition for Local Survival}
We will now prove Theorem \ref{GWls}. In what follows, assume $p_0=0$. Let $p_x=\mu/d(x)$ be the probability that the particle at $x$ will be replaced by a child moving closer to the root. Define a harmonic function depending on the distance to the root by
\begin{equation}\label{mtg1}
\phi(x)=p_x\phi(x-1)+(1-p_x)\phi(x+1)
\end{equation}
Note (\ref{mtg1}) is equivalent to

\begin{align*}
\phi(x+1)-\phi(x)&=\frac{p_x}{1-p_x}\left[ \phi(x)-\phi(x-1)\right]\\
&=\frac{\mu}{d(x)-\mu}\left[ \phi(x)-\phi(x-1)\right]
\end{align*}
Suppose $0=x_0, x_1, \dots, x_n=x$ is the path from the root to $x$. We have 
\begin{align}\label{rcuxn}
\phi(x_n)-\phi(x_{n-1})&=\prod^{n-1}_{k=1}\frac{\mu}{d(x_k)-\mu}\left[\phi(x_1)-\phi(0)\right]
\end{align}
This recursion allows us to impose function $\phi(x)$ on each vertex $x\in G(V,E)$. By Theorem 6.4.8 in \cite{PTE} , if $\phi(x)\to \infty $ for all $l_x\to \infty$ then the dual survives locally. However, it is more convenient to pursue conditions such that the $\log$ increment $\log[\phi(x)-\phi(x-1)]\to\infty$ instead 
as we will see later.

Taking $\log$ of the recursion formula (\ref{rcuxn}) gives
\begin{equation*}
\log\left[ \phi(x_n)-\phi(x_{n-1})\right]=\log\left[\phi(x_1)-\phi(0)\right]+\sum^{n-1}_{k=1}\log\frac{\mu}{d(x_k)-\mu}
\end{equation*}

Suppose the Galton-Watson tree has degree distribution $\{q_j\}$. Now consider a branching random walk on $\mathbb{R}$ which has an initial particle at the origin. With probability $q_j$, it gives birth to $j-1$ particles at $\log \frac{\mu}{j-\mu}$ and this forms a point process $Z$. The location of the first generation is denoted as $\{z^1_r\}$ where $r$ is the index of each individual. For each particle $x$ in the first generation, it generates new particles in a similar way. The location of its children has the same distribution as $\{z^1_r+x\}$. We obtain the second generation by taking all the children of the first generation. Let $\{z^2_r\}$ be the locations of the second generation. The following generations are produced under the same manner. Denote $\{z^n_r\}$ as the location of the $n$th generation individuals. Let $F(t)=E\left[Z(-\infty,t]\right]$ be the expected number of points in $Z$ to the left of $t$. Define
\begin{align*}
m(\theta)&=\int^\infty_{-\infty}e^{-\theta t}dF(t)=\sum_{j\ge 3}q_j(j-1)\left(\frac{j-\mu}{\mu}\right)^\theta
\end{align*}
To avoid notational confusion, we use $\nu(a)=\inf\{e^{\theta a}m(\theta):\theta\ge 0\}$. This is (2.1) defined in \cite{B77}. It follows from Corollary (3.4) in \cite{B77} that $\nu(0)<1$ implies $\log[\phi(x)-\phi(x-1)]\to \infty$ for all $l_x\to \infty$. Hence $\nu(0)<1$ is a sufficient condition for local survival.

\noindent{\it Remark.} On the $d-$regular tree, 
\begin{equation*}
m(\theta)=(d-1)\left(\frac{d-\mu}{\mu}\right)^\theta
\end{equation*}
Then $v(0)<1$ iff $\mu>d/2$, which gives another proof of Theorem \ref{qpf}.

\subsection{Degree = 3 and 4}\label{3n4}
\mn
Our recursion is 
$$
\phi(x+)-\phi(x) = \frac{\mu}{d(x)-\mu} (\phi(x) - \phi(x-))
$$
$x-$ is neighbor closer to root. $x^+$ is any neighbor further away
\begin{align}
m(\theta)& =2q_3\left(\frac{3-\mu}{\mu}\right)^\theta+3q_4\left(\frac{4-\mu}{\mu}\right)^\theta
\label{mdef} \\
m'(\theta) &= 2q_3\left(\frac{3-\mu}{\mu}\right)^\theta\log\left(\frac{3-\mu}{\mu}\right) 
+3q_4\left(\frac{4-\mu}{\mu}\right)^\theta\log\left(\frac{4-\mu}{\mu}\right)
\label{mderiv}
\end{align}
Reacall $\nu(0) = \min\{ m(\theta) : \theta \ge 0 \}$.

\subsubsection{$\mu>2$}

Since $\mu/(3-\mu)$ and $\mu/(4-\mu)$ are both $> 1$, $\phi(x_n) \to \infty$ along any path $x_n \to \infty$,
so the process survives strongly.  

\subsubsection{$\mu\le 3/2$}

Since $\mu/(3-\mu)$ and $\mu/(4-\mu)$ are both $< 1$, $\phi(x_n) \to 0$ along any path $x_n \to \infty$.
However this only tells us that the proof fails.

\subsubsection{$3/2<\mu\le 2$}

\mn
Case 1. Note that if $2q_3>1$ there is a path to $\infty$ (which may not start at the root) along which we take the products of $\mu/(3-\mu)$
and hence $\phi(x_n) \to 0$, so the proof fails.

\mn
Case 2. $(4-\mu)/\mu > (3-\mu)/\mu$ so if
$$
m'(0) = 2q_3\log\left(\frac{3-\mu}{\mu}\right) +3q_4\log\left(\frac{4-\mu}{\mu}\right) > 0
$$
then $m'(\theta)>0$ for all $\theta>0$ and the minimum occurs at 0. $m(0) = 2q_3 + 3q_4 \ge 2$, 
so again the proof fails. let $q_3=p$ and $q_4=1-p$. For fixed $\mu$, $m'(0)$ is linear in $p$ so the condition holds when
$$
p < p_c = \frac{ 3 \log((4-\mu)/\mu)} { 3 \log((4-\mu)/\mu) + 2 \log(\mu/(3-\mu))}
$$
See Figure \ref{fig:m'(0)} for various $\mu$.

\begin{figure}[tbp] 
  \centering
  \includegraphics[bb=53 58 737 555,height=3.5in,keepaspectratio]{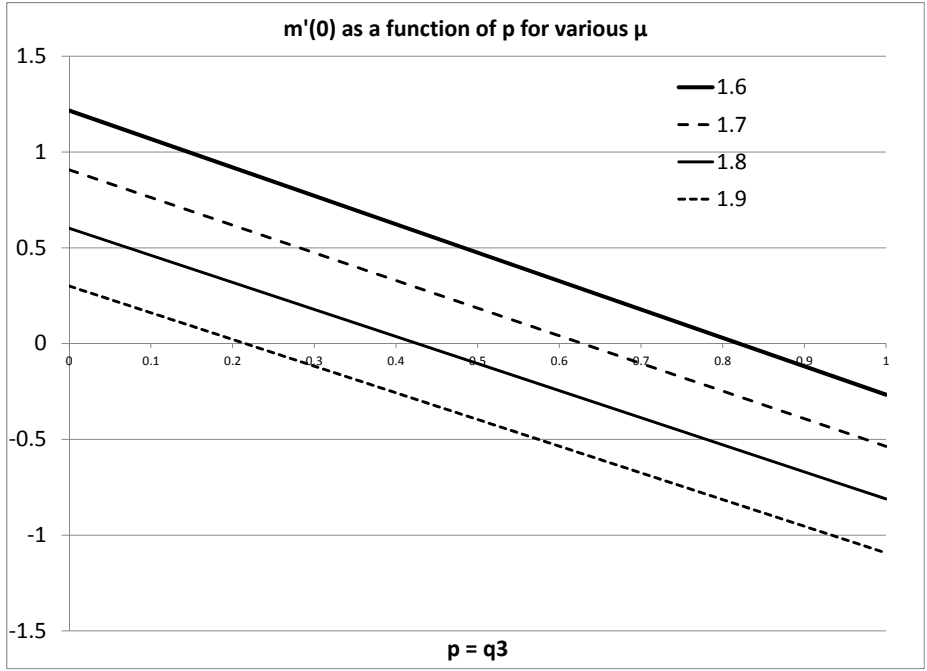}
  \caption{Local survival is possible only if $\mu'(0)<0$. This is $q_3>0.85$ for $\mu=1.6$; $q_3>0.65$ for $\mu=1.7$; $q_3>0.45$ for $\mu=1.8$ and $q_3>0.25$ for $\mu=1.9$.}
  \label{fig:m'(0)}
\end{figure}

\mn
Case 3. If $m'(0)<0$ then a minimum at $\bar\theta>0$ exists. Using \eqref{mderiv} we want
$$
2q_3 \left(\frac{3-\mu}{\mu}\right)^\theta\log\left(\frac{\mu}{3-\mu}\right)
=3q_4\left(\frac{4-\mu}{\mu}\right)^\theta\log\left(\frac{4-\mu}{\mu}\right)
$$
Cross multiplying
$$
\left( \frac{4-\mu}{3 - \mu}\right)^\theta = \frac{2q_3\log(\mu/(3-\mu))}{ 3q_4\log((4-\mu)/\mu) } 
$$
Let $A$ be the numerator and $B$ be the denominator of the fraction. $m'(0)<0$ implies $A>B$.
The LHS is 1 at $\theta=0$ and increases $\to\infty$ as $\theta\to\infty$ so a solution exists.
Taking logs
$$
\theta \log( (4-\mu)/(3-\mu) ) = \log(A) - \log(B)
$$
so we have
\beq
\bar\theta = \frac{\log(A)-\log(B)}{ \log( (4-\mu)/(3-\mu) ) }
\label{bartheta}
\eeq
There does not seem to be a good formula for $m(\bar\theta)$. To compute it numerically, we choose $\mu=1.6, \ 1.7, \ 1.8$ and  $1.9$ for
$$
\nu(0) = m(\bar\theta) = 2q_3 \exp( \bar\theta \log((3-\mu)/\mu ) + 3q_4 \exp( \bar\theta \log((4-\mu)/\mu )
$$
It shows from the table that the phase transition occurs at $q_3=0.996,\ 0.97, \ 0.91$ and $0.82$ respectively.

\begin{tabular}{l*{4}{c}}
$q_3$ & $\mu=1.6$ & $\mu=1.7$ & $\mu=1.8$ & $\mu=1.9$\\
\hline
0.8&	2.2	& 2.014149722	 &	1.597069414	&	1.074539921	\\
0.81 &	2.19		&	1.979137551	&	1.549560204	&	{\bf 1.030929768}	\\
0.82	& 2.179999993	&	1.942042353	&	1.500601354	&      {\bf 0.896331678}	\\
0.83	& 2.169035962	&	1.902724759	&	1.450116162	&	0.941977346	\\
0.88	& 2.079229445	&	1.666138794	&	1.171184237	&	0.708137684	\\
0.89	& 2.052259329	&	1.608953028	&	1.109102792	&	0.659079066	\\
0.9	& 2.021286727	&	1.547575636	&	{\bf 1.044453965}	&	0.609119574	\\
0.91	& 1.985646496	&	1.481425138	&	{\bf 0.976943138}	&	0.558174592	\\
0.92	& 1.944464056	&	1.409753116	&	0.906197761	&	0.506138592	\\
0.93	& 1.896552634	&	1.331568175	&	0.831734216	&	0.452876792	\\
0.94	& 1.840236437	&	1.245508443	&	0.752903513	&	0.398211752	\\
0.95	& 1.773023132	&	1.14961228	&	0.668796138	&	0.341900422	\\
0.96	& 1.690934707	&	{\bf 1.040865809}	&	0.578059943	&	0.283591365	\\
0.97	& 1.586938026	&	{\bf 0.914185487}	&	0.478505588	&	0.222735055	\\
0.98	& 1.446391322	&	0.759622966	&	0.366073412	&	0.158358633	\\
0.99	& 1.227510494	&	0.551306428	&	0.23102478	&	0.088284998	\\
0.995 &	{\bf 1.037752234}	\\											
0.996 &	{\bf 0.98268267}	\\											
0.997 &	0.915774891	\\											
0.998 &	0.828879261	\\

\end{tabular}

\clearpage

\end{document}